\documentclass[12pt]{amsart}
\pdfoutput=1
\usepackage[foot]{amsaddr}

\usepackage[utf8]{inputenc}
\usepackage{indentfirst}
\usepackage{amsmath}
\usepackage{amsthm}
\usepackage{amssymb}
\usepackage{amsfonts}
\usepackage{microtype}
\usepackage{fancyhdr}
\usepackage{tikz-cd}
\usetikzlibrary{arrows}
\usetikzlibrary{positioning}
\usepackage{xcolor}
\usepackage[colorlinks]{hyperref}
\usepackage{mathtools}
\usepackage{thmtools}


\usepackage{import}
\usepackage{xifthen}
\usepackage{pdfpages}
\usepackage{transparent}

\usepackage[
  margin=1in,
  includefoot,
  footskip=30pt,
]{geometry}

\usepackage{csquotes}


\usepackage[style=alphabetic, citestyle=alphabetic]{biblatex}
\addbibresource{dbcoh.bib}

\hypersetup{
  colorlinks = true,
  citecolor = {purple!70!black},
  linkcolor = {red!60!black}
}


\newcommand{\CC}{\mathbb{C}}
\newcommand{\QQ}{\mathbb{Q}}

\newcommand{\OO}{\mathcal{O}}
\newcommand{\PP}{\mathbb{P}}

\newcommand{\ZZ}{\mathbb{Z}}

\newcommand{\Cone}{\mathrm{Cone}}

\newcommand{\Ext}{\mathrm{Ext}}

\newcommand{\Hom}{\mathrm{Hom}}

\newcommand{\Pic}{\mathrm{Pic}}

\newcommand{\Coh}{\mathrm{Coh}}
\newcommand{\Dbcoh}{D^b_{\!\mathrm{coh}}}

\newcommand{\cA}{\mathcal{A}}

\newcommand{\infinity}{\infty}

\newcommand{\dual}{{\scriptstyle\vee}}
\newcommand{\iso}{\simeq}
\newcommand{\caniso}{\cong}

\newcommand{\monoarrow}{\hookrightarrow}
\newcommand{\epiarrow}{\twoheadrightarrow}

\newcommand{\im}{\operatorname{im}}
\newcommand{\rk}{\operatorname{rk}}

\newcommand{\coker}{\operatorname{coker}}

\newcommand{\supp}{\operatorname{supp}}

\declaretheoremstyle[
headformat=\NUMBER.\,\NAME\NOTE,
postheadspace=.5em,
spaceabove=6pt,
headfont=\normalfont\small\scshape,
notefont=\normalfont\small\mdseries, notebraces={(}{)},
bodyfont=\normalfont\itshape
]{plainswap}
\declaretheoremstyle[
headformat=\NAME\NOTE,
postheadspace=.5em,
spaceabove=6pt,
headfont=\normalfont\small\scshape,
notefont=\normalfont\small\mdseries, notebraces={(}{)},
bodyfont=\normalfont\itshape
]{nonumplainswap}
\declaretheoremstyle[
headformat=\NUMBER.\,\NAME\NOTE,
postheadspace=.5em,
spaceabove=6pt,
headfont=\normalfont\small\scshape,
notefont=\normalfont\mdseries, notebraces={(}{)},
bodyfont=\normalfont
]{definitionswap}
\declaretheoremstyle[
headformat=\NAME\NOTE,
postheadspace=.5em,
spaceabove=6pt,
headfont=\normalfont\itshape,
notefont=\mdseries, notebraces={(}{)},
bodyfont=\normalfont
]{myremark}

\declaretheorem[style=plainswap, name=Theorem, sharenumber=subsection]{theorem}

\declaretheorem[style=plainswap, numberlike=theorem, name=Proposition]{proposition}

\declaretheorem[style=plainswap, numberlike=theorem, name=Lemma]{lemma}
\declaretheorem[style=plainswap, numberlike=theorem, name=Corollary]{corollary}

\theoremstyle{definition}
\declaretheorem[style=definitionswap, numberlike=theorem, name=Definition]{definition}

\declaretheorem[style=definitionswap, numberlike=theorem, name=Example]{example}
\declaretheorem[style=definitionswap, numberlike=theorem, name=Remark]{remark}
\declaretheorem[style=definitionswap, numberlike=theorem, name=Question]{question}

\numberwithin{equation}{theorem}

\usepackage{quiver}

\relpenalty=10000
\binoppenalty=10000

\title{Generators vs. classical generators in derived categories of curves}
\author{Dmitrii Pirozhkov$^{1, 2}$}
\address{$^1$Steklov Mathematical Institute of Russian Academy of Sciences, Moscow, Russia}
\address{$^2$Laboratory of Mirror Symmetry, National Research University Higher School of Economics, Moscow, Russia}
\email{dpirozhkov@mi-ras.ru}

\begin{document}

\begin{abstract}
This is mostly an expository note about an example communicated to the author by Aise Johan de Jong. In a triangulated category $T$ an object $G$ is said to be a classical generator when the smallest triangulated subcategory containing $G$ coincides with the whole $T$, and it is said to be a generator when the orthogonal complement to $G$ in $T$ is zero, i.e., when any non-zero object of $T$ admits a non-zero map from a shift of $G$. Any classical generator is a generator, but not vice versa. We discuss a simple algebro-geometric example of a non-classical generator in the derived category of coherent sheaves on any smooth proper curve of genus $g \geq 2$. We also overview what is known and what is not known, in general, about generators and classical generators on curves.
\end{abstract}
\maketitle

\section{Introduction}

In a triangulated category $T$ there are two different notions of what it means for an object~$G \in T$ to ``generate'' the whole triangulated category:

\begin{definition}
  \label{def:classical_generator}
  Let $T$ be a triangulated category. For an object $G \in T$ denote by~$\langle G \rangle \subset T$ the smallest triangulated subcategory of~$T$ containing~$G$ and closed under taking direct summands. An object $G$ is a \textit{classical generator} of~$T$ if $\langle G \rangle = T$.
\end{definition}

\begin{definition}
  \label{def:generator}
  Let $T$ be a triangulated category. For an object $G \in T$ denote by $G^\perp \subset T$ the subcategory of objects $E \in T$ such that $\Hom^{\bullet}_{T}(G, E) = 0$. An object $G$ is a \textit{generator} of~$T$ if~$G^\perp = 0$.
\end{definition}

\begin{remark}
  \label{rem:generators_can_be_sets}
  Both definitions may be stated for an arbitrary subset $S \subset T$ of objects instead of a single object $G$. A subset $S \subset T$ which is a generator in the sense of Definition~\ref{def:generator} is often called a \textit{spanning class} (see, e.g.,~\cite[Def.~1.47]{huybrechts-fm}).
\end{remark}

\begin{remark}
  We follow the terminology from \cite{bondal-vdbergh}. There are other variants: some sources use the term \emph{split generator} for what we call a classical generator, and \emph{weak generator} for what we call a generator.
\end{remark}

Both these definitions are natural and useful. The notion of a classical generator is closer in spirit to the idea of presenting a basic algebraic object (group, ring, module) in terms of generators and relations, while the condition of being a generator is usually much easier to check and suffices for many purposes. Any classical generator is a generator (see, e.g.,~\cite[Tag~09SL]{stacks-project}), but not necessarily the other way around.

\begin{example}
  \label{ex:local_ring_counterexample}
  A standard counterexample involves a regular local Noetherian ring $R$ with the maximal ideal $\mathfrak{m}$: if $T$ is the bounded derived category of finitely generated modules over $R$, then the simple module~$R / \mathfrak{m}$ is a generator of $T$ by Nakayama's lemma, but the subcategory~$\langle R / \mathfrak{m} \rangle \subset T$ consists only of objects supported at $\mathfrak{m} \in \mathrm{Spec}(R)$ and does not include, for example, $R$ itself.  
\end{example}

The relation between the two definitions is to a certain degree clarified by the following fundamental theorem. For background on compact objects and compactly generated categories see, for example,~\cite{bondal-vdbergh}; we won't use these notions outside of the introduction.

\begin{theorem}[{Ravenel--Neeman~\cite{neeman-localization}}]
  \label{thm:ravenel_neeman}
  Let $T$ be a compactly generated triangulated category, and let $T^c \subset T$ be the subcategory of compact objects. For a compact object $G \in T^c$ the following are equivalent:
  \begin{itemize}
  \item $G$ is a classical generator of $T^c$;
  \item $G$ is a generator of $T$.
  \end{itemize}
\end{theorem}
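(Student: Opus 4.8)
The plan is to prove the two implications separately; the forward direction (classical generator of $T^c$ $\Rightarrow$ generator of $T$) is elementary, while the reverse direction is the real content and rests on the Thomason--Neeman localization machinery. First I would show that if $G$ classically generates $T^c$, then $G^\perp = 0$ in $T$. Suppose $E \in G^\perp$, i.e.\ $\Hom^\bullet_T(G, E) = 0$. The full subcategory of objects $C$ with $\Hom^\bullet_T(C, E) = 0$ is triangulated and closed under direct summands, so it contains $\langle G \rangle = T^c$. Hence $\Hom^\bullet_T(C, E) = 0$ for every compact $C$, i.e.\ $E \in (T^c)^\perp$. Since $T$ is compactly generated, the compact objects are jointly a generator, so $(T^c)^\perp = 0$ and therefore $E = 0$.

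For the converse, assume $G^\perp = 0$ and let $\mathcal{L} \subset T$ be the localizing subcategory generated by $G$, i.e.\ the smallest triangulated subcategory closed under arbitrary small coproducts and containing $G$. The first step is to show $\mathcal{L} = T$. The class of objects $E$ on which $\Hom^\bullet_T(-, E)$ vanishes is localizing and contains $G$, hence contains $\mathcal{L}$, so it is squeezed between $\mathcal{L}^\perp$ and $G^\perp$; this gives $\mathcal{L}^\perp = G^\perp = 0$. Because $\mathcal{L}$ is generated by a compact object of a compactly generated category, Neeman's theorem~\cite{neeman-localization} provides a right adjoint to the inclusion $\mathcal{L} \hookrightarrow T$: every $E \in T$ sits in a localization triangle $E' \to E \to E''$ with $E' \in \mathcal{L}$ and $E'' \in \mathcal{L}^\perp = 0$, forcing $E \cong E' \in \mathcal{L}$. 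Thus $\mathcal{L} = T$.

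It remains to descend from ``$G$ generates $T$ as a localizing subcategory'' to ``$G$ classically generates $T^c$.'' Here I would invoke the Thomason--Neeman localization theorem in the form proved in~\cite{neeman-localization}: if $\mathcal{L}$ is the localizing subcategory of a compactly generated category generated by a set $S$ of compact objects, then $\mathcal{L}$ is itself compactly generated and its subcategory of compact objects $\mathcal{L} \cap T^c$ is the idempotent completion of the thick subcategory of $T^c$ generated by $S$. Applying this with $S = \{G\}$ and using $\mathcal{L} = T$, we get that $T^c = \mathcal{L} \cap T^c$ is the idempotent completion of $\langle G \rangle$. Now $T^c$ is idempotent complete (Bökstedt--Neeman), and $\langle G \rangle$ is by Definition~\ref{def:classical_generator} closed under direct summands inside $T^c$, so any idempotent in $\langle G \rangle$ splits in $T^c$ with both summands back in $\langle G \rangle$; hence $\langle G \rangle$ is already idempotent complete and equals its own idempotent completion. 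Therefore $\langle G \rangle = T^c$, i.e.\ $G$ is a classical generator of $T^c$.

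The main obstacle is the reverse direction, and specifically the correct use of Thomason--Neeman localization: one must know both that a localizing subcategory generated by compact objects is again compactly generated and that its compact part is the idempotent completion of the evident thick subcategory, and then keep careful track of the idempotent-completion issue. The passage from the thick subcategory generated by $G$ to the compacts of $\mathcal{L}$ is only an equivalence after idempotent completion in general, and it is precisely the idempotent-completeness of $T^c$ together with the summand-closure built into $\langle G \rangle$ that upgrades this to a genuine equality. By comparison, the forward implication and the identification $\mathcal{L}^\perp = G^\perp$ are routine.
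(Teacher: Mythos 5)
This statement is quoted in the paper as a known theorem with a citation to \cite{neeman-localization}; the paper gives no proof of its own (it explicitly says compactness is not used outside the introduction), so there is nothing internal to compare against. Your proof is correct, and it is essentially the standard argument from the cited literature: the forward direction via the observation that $\{C : \Hom^\bullet_T(C,E)=0\}$ is thick, and the reverse direction via the Neeman--Thomason localization theorem applied to the localizing subcategory $\mathcal{L}$ generated by $G$. All the delicate points are handled properly: the identification $\mathcal{L}^\perp = G^\perp$ (which, as you implicitly use, needs no compactness, since $\emptyperp E$ is localizing for any fixed $E$); the existence of the localization triangle, which does require knowing that $\mathcal{L}$ is generated by a set of compact objects so that Brown representability applies; and the idempotent-completion issue at the end, resolved correctly by noting that $T^c$ is idempotent complete (Bökstedt--Neeman, using that $T$ has coproducts) and that $\langle G \rangle$ is summand-closed in $T^c$ by definition, hence already idempotent complete. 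One could shorten the last step slightly: since a thick subcategory of an idempotent-complete triangulated category is itself idempotent complete, the completion is automatic and $T^c = \mathcal{L}\cap T^c = \langle G \rangle$ follows at once from $\mathcal{L}=T$.
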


In the Example~\ref{ex:local_ring_counterexample} above, where $T$ is the bounded derived category of finitely generated modules over $R$, the module $R / \mathfrak{m}$ is not a classical generator because while it is a generator of the ``compact'' bounded derived category of finitely generated $R$-modules, it fails to be a generator of the ambient compactly generated category, namely the unbounded derived category of arbitrary $R$-modules; indeed, the function field of $R$ is an object of this larger triangulated category which lies in the orthogonal to $R / \mathfrak{m}$.

In this note we concentrate on some especially nice triangulated categories appearing in algebraic geometry. Let $X$ be a smooth projective variety over the field $\CC$. The bounded derived category $\Dbcoh(X)$ of coherent sheaves on $X$ is a triangulated category, and from now on we will exclusively consider triangulated categories of that form. Such categories admit explicit classical generators:

\begin{example}[{see, e.g.,~\cite[Tag~0BQT]{stacks-project}}]
  \label{ex:classical_generators_exist}
  Let $j\colon X \monoarrow \PP^N$ be a closed embedding of the projective variety $X$ into some projective space. Then the direct sum
  \[ \OO_X \oplus j^* \OO_{\PP^N}(-1) \oplus \dots \oplus j^* \OO_{\PP^N}(-N) \]
  is a classical generator of $\Dbcoh(X)$.
\end{example}

\begin{remark}
  The existence of classical generators for categories of algebro-geometric origin is known in a much broader generality, the projectivity of $X$ is used only for simplicity. See~\cite{bondal-vdbergh} or more recent work like~\cite{neeman-generators} for more general statements.
\end{remark}

The difference between generators and classical generators is not just a matter of esoteric relevance; if, by some miracle, these two notions were equivalent for categories of the form~$\Dbcoh(X)$, this would have actual useful consequences in algebraic geometry. For example, let $C$ be a smooth projective curve. It is not hard to check that a semistable vector bundle on a curve $C$ can never be a \textit{classical generator} of~$\Dbcoh(C)$ (see Corollary~\ref{cor:semistables_form_triangulated_subcategory}). Somewhat surprisingly, the stronger fact that it can never be a \textit{generator} of $\Dbcoh(C)$ appeared in an unpublished work of Faltings (stated in a different way; see \cite[Lem.~3.1]{seshadri}) and is extremely important for the study of moduli spaces of semistable vector bundles on curves; in fact, a modern proof of the projectivity of these moduli spaces relies on this quite complicated observation~\cite[Ch.~3]{spec-book}. If the theory of triangulated categories somehow helped us upgrade the property of not being a classical generator to the property of not being a generator, even in this very particular case, this would give a quick and conceptual proof of Faltings' result. Unfortunately, this cannot happen: even in triangulated categories as nice as $\Dbcoh(X)$ for a smooth projective variety $X$ a generator does not have to be a classical generator.

However, the standard Example~\ref{ex:local_ring_counterexample} of a generator which is not a classical generator cannot be adapted to the setting where $X$ is assumed to be projective. Nakayama's lemma only applies in the local situation. At best, we can take the set of skyscrapers at all points of~$X$ and say that this set generates $\Dbcoh(X)$ as in Remark~\ref{rem:generators_can_be_sets}, but an uncountable collection of coherent sheaves is not really similar to a single coherent sheaf. Thus, to demonstrate a generator of $\Dbcoh(X)$ that is not a classical generator, we need to find another good reason for an object to have a trivial orthogonal.

When $X$ is a curve, it turns out that the Riemann--Roch theorem can be a pretty good reason. The following example was suggested to me by Aise Johan de Jong in 2018\footnote{\ldots after I spent a month or two trying to prove that when $X$ is a curve this cannot happen.} and exists on any curve of genus at least two.

\begin{theorem}[{de Jong, private communication}]
  \label{thm:main_example}
  Let $C$ be a smooth projective curve of genus at least $2$, and let $L$ be a line bundle of degree $1$ on $C$ with no global sections. Then the direct sum $\OO_C \oplus L$ is a generator of $\Dbcoh(C)$, but not a classical generator.
\end{theorem}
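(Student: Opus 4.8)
The plan is to treat the two assertions separately. That $\OO_C \oplus L$ is a generator will come from Riemann--Roch, exactly as the introduction advertises; that it is not a classical generator will come from analyzing the graded endomorphism algebra $\mathcal E := \REnd(\OO_C \oplus L)$, which turns out to have infinite global dimension. It is worth noting up front that both hypotheses on $L$ enter: the absence of sections and the positivity of the degree kill all maps between $\OO_C$ and $L$ in either direction, while $\deg L = 1$ is precisely what makes the numerical argument below nondegenerate.

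For the generator statement, suppose $E \in \Dbcoh(C)$ lies in $(\OO_C \oplus L)^\perp$, so that $\RHom(\OO_C, E) = 0$ and $\RHom(L, E) = 0$. Since $C$ is a smooth curve, $\Coh(C)$ is hereditary, hence $E \simeq \bigoplus_i \mathcal H^i(E)[-i]$ and it suffices to treat a single coherent sheaf $F$ with $\RGamma(F) = 0$ and $\RGamma(F \otimes L^{-1}) = 0$; in particular $\chi(F) = \chi(F \otimes L^{-1}) = 0$. Writing $F = T \oplus V$ as the sum of its torsion subsheaf and a vector bundle (such a splitting exists on a smooth curve), I would feed both Euler characteristics into Riemann--Roch. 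Because $\deg L = 1$, tensoring by $L^{-1}$ lowers $\chi$ of the bundle part by exactly its rank while leaving the torsion length $t$ untouched, so the two conditions read $t + \deg V + \rk V\,(1-g) = 0$ and $t + \deg V - \rk V + \rk V\,(1-g) = 0$; subtracting gives $\rk V = 0$, whence $V = 0$ and then $t = 0$, i.e. $F = 0$. I expect this step to be routine: its only real content is that the degree-$1$ twist makes the $2 \times 2$ system nondegenerate, and notably only the Euler characteristics, not the full vanishing, are needed.

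For the failure of classical generation, I would compute the cohomology of $\mathcal E$. Here $L$ having no sections gives $\Hom(\OO_C, L) = H^0(L) = 0$, and $\deg L^{-1} = -1 < 0$ gives $\Hom(L, \OO_C) = H^0(L^{-1}) = 0$; together with $\Hom(\OO_C,\OO_C) = \Hom(L,L) = \CC$ this yields $H^0(\mathcal E) = \CC \times \CC$. In degree $1$ the Ext groups are nonzero (already $\Ext^1(\OO_C,\OO_C) = H^1(\OO_C) = \CC^g$), while $\Ext^{\geq 2}$ vanishes because $C$ is a curve. Thus $\mathcal E$ is concentrated in cohomological degrees $0$ and $1$ with semisimple $H^0$; since any product of two degree-$1$ classes lands in the vanishing $\Ext^2$, the algebra is formal and equals the square-zero extension $(\CC \times \CC) \ltimes H^1(\mathcal E)$, which has infinite global dimension.

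Finally I would invoke the standard identification $\langle \OO_C \oplus L \rangle \simeq \Perf(\mathcal E)$ furnished by $\RHom(\OO_C \oplus L, -)$ on a dg-enhancement of $\Dbcoh(C)$, under which membership of an object in the thick closure corresponds to perfectness of its image module. For a skyscraper $\OO_p$ one has $\RHom(\OO_C, \OO_p) = \RHom(L, \OO_p) = \CC$ in degree $0$, so its image is the semisimple module $S_1 \oplus S_2$. Because $\mathcal E$ is proper but of infinite global dimension, its simple modules are not perfect: concretely $\Ext^\bullet_{\mathcal E}(S_1, S_1)$ is infinite-dimensional (the loops coming from $H^1(\OO_C)$ generate it freely), whereas $\RHom_{\mathcal E}(P, N)$ is finite-dimensional whenever $P$ is perfect and $N$ has finite-dimensional cohomology. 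Hence $\OO_p \notin \langle \OO_C \oplus L\rangle$, so $\OO_C \oplus L$ is not a classical generator. The main obstacle is exactly this last part: one must set up the equivalence with $\Perf(\mathcal E)$ on a genuine enhancement and argue the non-perfectness of the simple carefully, whereas the computation of $\mathcal E$ and the Riemann--Roch step are comparatively mechanical.
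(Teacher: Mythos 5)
Your proposal is correct in substance, and both halves take a genuinely different route from the paper. For the generator statement, the paper proves the general criterion that any non-semistable object is a generator (Corollary~\ref{cor:unstable_implies_generator}, via a destabilizing sequence and Riemann--Roch applied to an arbitrary orthogonal object), whereas you compute directly with $\OO_C \oplus L$: your observation that only the two Euler characteristics $\chi(F) = \chi(F \otimes L^{-1}) = 0$ are needed, and that $\deg L = 1 \neq 0$ makes the resulting linear system in $(\rk V, t)$ nondegenerate, is a clean special-case shortcut (it even works for any line bundle of nonzero degree, though it does not recover the paper's full Theorem~\ref{thm:generators_in_curves_criterion}). For the failure of classical generation, the divergence is more substantial: the paper stays entirely inside the abelian category, showing (Lemma~\ref{lem:extensions_do_not_generate_everything}, via Lemma~\ref{lem:serre_subcategory_implies_triangulated}) that iterated extensions of the Hom-orthogonal simple pair $\{\OO_C, L\}$ form a weak Serre subcategory, so $\langle \OO_C \oplus L \rangle$ misses every skyscraper; you instead pass through Keller's equivalence $\langle G \rangle \iso \Perf(\REnd(G))$ and obstruct perfectness of the image of $\OO_p$ by the infinite-dimensionality of $\Ext^\bullet_{\mathcal{E}}(S_1, S_1)$. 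Your dg route is heavier machinery but more conceptual (the obstruction is literally the infinite global dimension of $\REnd(\OO_C \oplus L)$), and your auxiliary steps check out: you only need the easy direction of the Keller dictionary (membership in the thick closure implies perfect image, since perfect modules form a thick subcategory containing the image of $G$), the image of $\OO_p$ is indeed $S_1 \oplus S_2$ concentrated in degree $0$ with $H^1(\mathcal{E})$ acting trivially for degree reasons, and the finiteness criterion for $\RHom$ out of a perfect module over a proper dg algebra is correct.

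The one step where your justification as written has a real gap is the formality of $\mathcal{E}$. The vanishing of products of two degree-one classes only controls the binary multiplication $m_2$; formality requires killing the higher $A_\infty$-products $m_k$, $k \geq 3$, on $H^\bullet(\mathcal{E})$, and these have degree $2 - k$, so an operation with $k-1$ inputs from $H^1$ and one input from $H^0$ lands in degree $1$ and is \emph{not} excluded by degree reasons alone (strict unitality of a minimal model only handles inputs proportional to the unit, not the separate idempotents $e_1, e_2$ of $H^0 = \CC \times \CC$). The standard repair uses exactly the semisimplicity of $H^0$: lift the idempotents so that $\mathcal{E}$ becomes a dg algebra over $S = \CC \times \CC$ and take a strictly unital minimal $A_\infty$-model relative to $S$; then all higher operations have every input in $H^1$ and land in $H^2 = 0$, giving formality and the square-zero model $S \ltimes H^1(\mathcal{E})$. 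With that model in place, your final computation is fine: the triangle of dg modules coming from the degree-one ideal exhibits $\Ext^n_{\mathcal{E}}(S, S) \caniso e (M^\vee)^{\otimes_S n} e$ with $M = H^1(\mathcal{E})$, and since there are $g \geq 2$ loops at the vertex of $\OO_C$, the component at $S_1$ grows without bound, so $S_1 \oplus S_2$ is not perfect and $\OO_p \notin \langle \OO_C \oplus L \rangle$. So the gap is genuine but standard to fill; the overall architecture of your argument is sound, at the cost of dg-enhancement technology that the paper's elementary weak-Serre-subcategory argument entirely avoids.
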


Note that such a line bundle $L$ exists since any line bundle of degree one with a global section is of the form $\OO_{C}(p)$ for some point $p \in C$, while $\dim \Pic^{1}(C) = g > \dim C = 1$.

It appears that in a vast body of research dedicated to the study of derived categories of coherent sheaves there is still no reference with an explicit example of a non-classical generator in a category of the form $\Dbcoh(X)$ with a smooth projective $X$. Thus, in this note we present a full proof of Theorem~\ref{thm:main_example} following the argument by de Jong. The proof assumes some familiarity with basic properties of (semi-)stable vector bundles on curves as well as, of course, with derived categories of coherent sheaves, but is otherwise self-contained. We also include a discussion of some open questions related to generators and classical generators in bounded derived categories of coherent sheaves on curves, which is the only part of this note with any claim to originality.

\textbf{Outline.} Along the way, in Section~\ref{sec:generators}, we prove a necessary and sufficient criterion for an object $E \in \Dbcoh(C)$ to be a generator (Theorem~\ref{thm:generators_in_curves_criterion}). In Section~\ref{sec:classical_generators} we highlight the fact that a collection of simple objects in an abelian category with no non-trivial maps between them can be a classical generator only in very rare circumstances (Lemma~\ref{lem:extensions_do_not_generate_everything}) and use this observation to complete the proof of Main Theorem~\ref{thm:main_example}. In Section~\ref{sec:questions}, divided into three subsections, we discuss further questions one could ask about (classical) generators of the categories $\Dbcoh(C)$. First, in Subsection~\ref{ssec:unstable_classical_generators} we show that on curves of genus one any generator is a classical generator (Corollary~\ref{cor:genus_one_generators}) by proving that certain sufficiently unstable vector bundles are always classical generators (Proposition~\ref{prop:sufficiently_unstable_bundles}). Second, in Subsection~\ref{ssec:behavior_in_families} we ask whether the property of being a classical generator is a closed condition in families of objects. Third, in Subsection~\ref{ssec:slow_generators} we check that most classical generators occurring in this note are somewhat well-behaved in the sense that their generating time has an upper bound depending only on the genus of the curve.

\textbf{Related work.} As mentioned above, figuring out which objects of~$\Dbcoh(C)$ are generators is a question relevant to the study of moduli spaces of various geometric objects on~$C$; the prototypical example is Faltings' construction (see \cite{seshadri} or, for a more modern introduction,~\cite[Ch.~3]{spec-book}) of an ample line bundle on the moduli space of semistable vector bundles on curves. For an overview of these applications see, e.g.,~\cite{popa-theta-series}. As for the study of classical generators on curves, the key result is Orlov's construction~\cite{orlov-curves} of an ``optimal'' classical generator on each smooth projective curve, and the work by Ballard, Favero and Katzarkov~\cite{ballard-favero-katzarkov}, who studied the behavior of some other classical generators. The existence of classical generators and certain properties of them such as the generating time on other classes of varieties (higher dimensional, singular, etc.) is an active research area; see, e.g., \cite{neeman-generators} as a starting point. Additionally, it appears that the example from Theorem~\ref{thm:main_example} is known in some circles, e.g., to Wahei Hara.

\textbf{Acknowledgments.} This text would not exist were it not for Aise Johan de Jong who explained Theorem~\ref{thm:main_example} to me. I thank Alexander Kuznetsov and Dmitri Orlov for helpful discussions about derived categories of curves. I had been vaguely planning to write a thorough discussion of de Jong's example for a long time, and was pushed to action during the school \textit{Autumn ALGEULER 2025} in Saint-Petersburg, when a brief mention of this example during my talk led to an unexpected amount of interest from the audience. 
The study has been funded within the framework of the HSE University Basic Research Program.

\section{Generators on curves}
\label{sec:generators}

Let $C$ be a smooth projective curve over the field $\CC$. Let $\Coh(C)$ be the category of coherent sheaves on $C$. Recall that since $\Coh(C)$ has homological dimension one, any object in the bounded derived category $\Dbcoh(C)$ of coherent sheaves on $C$ is formal:

\begin{lemma}[{see, e.g., \cite[Cor.~3.15]{huybrechts-fm}}]
  \label{lem:objects_on_curves_split}
  Any object $E \in \Dbcoh(C)$ is isomorphic to the direct sum $\oplus_{i \in \ZZ} \mathcal{H}^{i}(E)[-i]$.
\end{lemma}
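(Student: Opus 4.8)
The plan is to induct on the number of nonzero cohomology sheaves of $E$, exploiting the fact that $\Coh(C)$ is hereditary (has homological dimension one) to show that the canonical truncation triangles split. The whole content of the lemma is packaged into a single vanishing: $\Ext^{\geq 2}_{\Coh(C)} = 0$.

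First I would dispose of the base case. If $E$ has cohomology concentrated in a single degree $i$, that is $\mathcal{H}^j(E) = 0$ for all $j \neq i$, then $E \iso \mathcal{H}^i(E)[-i]$. This holds in the derived category of an arbitrary abelian category and requires no hereditariness: it is the standard fact that the inclusion of the heart $\Coh(C) \hookrightarrow \Dbcoh(C)$ (as complexes placed in degree zero) is fully faithful with essential image exactly the objects whose cohomology sits in a single degree.

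For the inductive step, let $b$ be the top degree in which $E$ has nonzero cohomology, so that $\tau_{\geq b} E \iso \mathcal{H}^b(E)[-b]$. The canonical truncation provides a distinguished triangle
\[ \tau_{\leq b-1} E \to E \to \mathcal{H}^b(E)[-b] \xrightarrow{\delta} (\tau_{\leq b-1} E)[1]. \]
The key point is that the connecting morphism $\delta$ vanishes. Indeed, $\tau_{\leq b-1} E$ has strictly fewer nonzero cohomology sheaves, so by the inductive hypothesis $\tau_{\leq b-1} E \iso \bigoplus_{i \leq b-1} \mathcal{H}^i(E)[-i]$, and therefore the group containing $\delta$ decomposes as
\[ \Hom\bigl(\mathcal{H}^b(E)[-b],\, (\tau_{\leq b-1} E)[1]\bigr) \iso \bigoplus_{i \leq b-1} \Ext^{\,b+1-i}\bigl(\mathcal{H}^b(E),\, \mathcal{H}^i(E)\bigr). \]
Every exponent satisfies $b+1-i \geq 2$, so each summand vanishes because $\Coh(C)$ has homological dimension one. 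Since a distinguished triangle whose connecting map is zero splits as a direct sum, we obtain $E \iso \tau_{\leq b-1} E \oplus \mathcal{H}^b(E)[-b]$, and the inductive hypothesis then yields the claimed decomposition.

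I do not anticipate any real obstacle: the single nontrivial input is the hereditariness of $\Coh(C)$, and everything else is formal manipulation inside a triangulated category. The one spot deserving a word of care is the assertion that $\delta = 0$ forces the triangle to split; I would justify this by the standard lemma that a vanishing connecting morphism is equivalent to the map $E \to \mathcal{H}^b(E)[-b]$ admitting a section, whence $E$ is the direct sum of the two outer terms.
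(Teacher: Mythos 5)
Your proof is correct and is exactly the standard argument behind the reference the paper cites for this lemma (the paper gives no proof of its own, deferring to \cite[Cor.~3.15]{huybrechts-fm}): induction on the amplitude, with the truncation triangle splitting because the connecting map lives in $\Ext^{\geq 2}$-groups that vanish by hereditariness of $\Coh(C)$. All the delicate points --- identifying derived $\Hom$ with abelian $\Ext$ on the heart, the finiteness of the direct sum, and the splitting criterion for a triangle with zero connecting morphism --- are handled correctly.
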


Thus we can harmlessly transfer some terminology used for coherent sheaves to arbitrary objects in $\Dbcoh(C)$:

\begin{definition}
  An object $E \in \Dbcoh(C)$ is said to be:
  \begin{itemize}
  \item a \textit{torsion object} if each cohomology sheaf of $E$ is a torsion coherent sheaf;
  \item a \textit{locally free object} if each cohomology sheaf of $E$ is locally free;
  \item a \textit{semistable object of slope $\lambda \in \QQ$} if all cohomology sheaves of $E$ are semistable vector bundles of slope $\lambda$;
  \item a \textit{semistable object} if it is semistable for some $\lambda \in \QQ$ or if it is a torsion object (``semistable of slope $\infinity$'').
  \end{itemize}
\end{definition}

It turns out that there is a simple criterion for describing the generators in the triangulated category~$\Dbcoh(C)$:

\begin{theorem}
  \label{thm:generators_in_curves_criterion}
  An object $E \in \Dbcoh(C)$ is a generator of $\Dbcoh(C)$ if and only if it is not semistable.
\end{theorem}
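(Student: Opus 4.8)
The plan is to push everything down to the level of individual coherent sheaves and then run a purely slope-theoretic bookkeeping. By Lemma~\ref{lem:objects_on_curves_split} every object of $\Dbcoh(C)$ splits as the sum of shifts of its cohomology sheaves, and since the curve has homological dimension one, $\Hom^\bullet_{\Dbcoh(C)}(E,F)=0$ if and only if $\Hom(\mathcal{E},\mathcal{F})=\Ext^1(\mathcal{E},\mathcal{F})=0$ for every pair of cohomology sheaves $\mathcal{E}$ of $E$ and $\mathcal{F}$ of $F$. Hence $E^\perp\neq 0$ exactly when there is a single nonzero coherent sheaf $\mathcal{F}$ orthogonal to all cohomology sheaves of $E$; splitting $\mathcal{F}$ into its torsion and locally free parts, I may assume $\mathcal{F}$ is either a nonzero torsion sheaf or a nonzero vector bundle. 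The first observation is that the Euler form $\chi((r,d),(r',d'))=rd'-r'd+rr'(1-g)$ already constrains such an $\mathcal{F}$ severely: orthogonality forces $\chi(\mathcal{E},\mathcal{F})=0$ for every $\mathcal{E}$, so if $\mathcal{F}$ is torsion then every $\mathcal{E}$ has rank $0$ (i.e.\ $E$ is a torsion object), while if $\mathcal{F}$ is a vector bundle then every $\mathcal{E}$ is a vector bundle of one common slope $\lambda:=\mu(\mathcal{F})-(g-1)$.

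For the implication ``$E$ not semistable $\Rightarrow$ $E$ is a generator'' I would argue by contraposition, taking a nonzero $\mathcal{F}\in E^\perp$ as above. If $\mathcal{F}$ is torsion the numerology already makes $E$ a torsion object, hence semistable. If $\mathcal{F}$ is a vector bundle, all cohomology sheaves of $E$ are bundles of the common slope $\lambda$, so $E$ can fail to be semistable only if some cohomology sheaf $\mathcal{E}$ is \emph{unstable}; this is the heart of the argument and I would settle it with Harder--Narasimhan filtrations and Serre duality. Let $\mathcal{A}\subseteq\mathcal{E}$ be the maximal destabilizing subsheaf, semistable of slope $a=\mu_{\max}(\mathcal{E})>\lambda$, and let $\mathcal{F}\twoheadrightarrow\mathcal{F}_{\min}$ be the minimal destabilizing quotient of $\mathcal{F}$, semistable of slope $\mu_{\min}(\mathcal{F})\leq\mu(\mathcal{F})=\lambda+(g-1)$. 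The inclusion $\mathcal{A}\hookrightarrow\mathcal{E}$ induces a surjection $\Ext^1(\mathcal{E},\mathcal{F})\twoheadrightarrow\Ext^1(\mathcal{A},\mathcal{F})$, and Serre duality gives $\Ext^1(\mathcal{A},\mathcal{F})\cong\Hom(\mathcal{F},\mathcal{A}\otimes\omega_C)^\dual$, into which $\Hom(\mathcal{F}_{\min},\mathcal{A}\otimes\omega_C)$ injects via precomposition with $\mathcal{F}\twoheadrightarrow\mathcal{F}_{\min}$. Since $\mathcal{A}\otimes\omega_C$ is semistable of slope $a+(2g-2)$ and $a+(g-1)>\lambda+(g-1)\geq\mu_{\min}(\mathcal{F})$, we get $\chi(\mathcal{F}_{\min},\mathcal{A}\otimes\omega_C)>0$, whence $\Hom(\mathcal{F}_{\min},\mathcal{A}\otimes\omega_C)\neq 0$. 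Chasing back, $\Ext^1(\mathcal{E},\mathcal{F})\neq 0$, contradicting $\mathcal{F}\in E^\perp$. So every non-semistable object is a generator, and this direction is entirely self-contained.

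For the converse, ``$E$ semistable $\Rightarrow$ $E$ is not a generator,'' I must exhibit a nonzero object in $E^\perp$. If $E$ is a torsion object this is immediate: its cohomology sheaves are supported on a finite set, so a skyscraper $\OO_p$ at any point $p$ outside that set has disjoint support and lies in $E^\perp$. If $E$ is semistable of finite slope $\lambda$, put $\mathcal{S}:=\bigoplus_i\mathcal{H}^i(E)$, a semistable bundle of slope $\lambda$, and fix any semistable bundle $\mathcal{F}_0$ of slope $\lambda+(g-1)$ (these exist, the relevant moduli space being nonempty). The point of the shift by $g-1$ is that $\mathcal{W}:=\mathcal{S}^\dual\otimes\mathcal{F}_0$ is then semistable of slope exactly $g-1$ --- here I use that a tensor product of semistable bundles is semistable in characteristic zero. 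Now I would invoke the hard input, Faltings' theorem (see \cite[Lem.~3.1]{seshadri} and \cite[Ch.~3]{spec-book}): a semistable bundle of slope $g-1$ admits a theta divisor, i.e.\ there is a bundle $N$ of degree $0$ with $H^0(\mathcal{W}\otimes N)=H^1(\mathcal{W}\otimes N)=0$. Twisting, $\mathcal{F}:=\mathcal{F}_0\otimes N$ is a nonzero bundle with $H^0(\mathcal{S}^\dual\otimes\mathcal{F})=H^1(\mathcal{S}^\dual\otimes\mathcal{F})=0$, which says precisely that $\mathcal{F}$ is orthogonal to every cohomology sheaf of $E$; hence $\mathcal{F}\in E^\perp$ and $E$ is not a generator.

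The main obstacle is exactly this last existence statement. Everything else is elementary slope bookkeeping with Harder--Narasimhan filtrations and Serre duality, but producing a cohomology-free twist at the critical slope $g-1$ cannot be done by such soft means: rank-one twists alone need not suffice (by Raynaud's examples), and one genuinely needs the higher-rank theta divisor of Faltings. This is consistent with the introduction: the substance of this direction of the criterion is the statement that semistable bundles are never generators, which is Faltings' theorem, so I would cite it rather than attempt to reprove it here.
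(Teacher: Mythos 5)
Your criterion and both halves of the argument are correct, and your easy direction is essentially the paper's proof of Lemma~\ref{lem:semiorthogonality_implies_semistability} run in dual form: where the paper takes a destabilizing quotient $E \epiarrow E''$ and uses $\chi(E'', F) > \chi(E, F) = 0$ to produce a nonzero map $E'' \to F$, you take the maximal destabilizing subsheaf $\mathcal{A} \subset \mathcal{E}$ and detect a nonzero $\Ext^1(\mathcal{E}, \mathcal{F})$ through Serre duality; the two are interchangeable, and your detour through $\mathcal{F}_{\min}$ is in fact superfluous, since $\chi(\mathcal{F}, \mathcal{A} \otimes \omega_C) = \rk(\mathcal{F})\rk(\mathcal{A})(a - \lambda) > 0$ already forces $\Hom(\mathcal{F}, \mathcal{A} \otimes \omega_C) \neq 0$ directly. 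One step does need a patch: when $\mathcal{F}$ is a vector bundle, the single equation $\chi(\mathcal{E}, \mathcal{F}) = 0$ does not by itself rule out a torsion summand of a cohomology sheaf $\mathcal{E}$, because for $\mathcal{E} \iso T \oplus V$ the two contributions $-\rk(\mathcal{F})\deg(T)$ and $\rk(V)\rk(\mathcal{F})\bigl(1 - g - \mu(V) + \mu(\mathcal{F})\bigr)$ could cancel. The fix is one line: split each $\mathcal{E}$ by Lemma~\ref{lem:coherent_sheaves_on_curves}, note that orthogonality passes to direct summands, and apply $\chi(T, \mathcal{F}) = -\rk(\mathcal{F})\deg(T) = 0$ to kill $T$. (Your torsion-$\mathcal{F}$ case needs no such care, since $\chi(\mathcal{E},\mathcal{F}) = \rk(\mathcal{E})\deg(\mathcal{F})$ sees only the rank.)

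For the hard direction your route genuinely differs from the paper's. The paper takes Faltings' input only for \emph{stable} bundles and then builds the extension itself: it passes to the Jordan--H\"older factors $E_1, \dots, E_m$, arranges the orthogonal bundles $F_i$ to have common rank and degree, and uses irreducibility of the moduli space $\mathcal{M}_{r,d}$ together with semicontinuity to find a single $F$ orthogonal to all factors at once. You instead invoke Faltings' theorem directly in its semistable theta-divisor form and reduce to the normalized slope $g-1$ by the tensor trick $\mathcal{W} = \mathcal{S}^\dual \otimes \mathcal{F}_0$, using semistability of tensor products in characteristic zero. That citation is defensible --- the paper itself cites \cite[Prop.~3.5.2]{spec-book} for precisely the semistable statement, which is Theorem~\ref{thm:faltings_orthogonality} restricted to sheaves --- so your version is shorter, but it assumes a strictly stronger black box: the paper's moduli-space argument is exactly the bridge from the stable case (the form in which the paper reads \cite[Lem.~3.1]{seshadri}) to the semistable one, and your write-up absorbs that bridge into the citation. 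Your handling of the torsion case (a skyscraper off the support) matches the paper's. Both treatments, of course, leave the deformation-theoretic core to Faltings, as they must.
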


Below we give a detailed proof for the ``easy'' implication: namely, in Corollary~\ref{cor:unstable_implies_generator} we show that any non-semistable object is a generator of $\Dbcoh(C)$, which essentially follows from the Riemann--Roch formula. This is the implication we need to prove Theorem~\ref{thm:main_example}. The converse implication, sometimes called ``Faltings' lemma'', is, at its core, a deep fact about deformation theory of vector bundles on curves. We discuss it briefly at the end of the section.

Recall a basic fact about coherent sheaves on a curve. We leave the proof as an exercise to the reader.

\begin{lemma}
  \label{lem:coherent_sheaves_on_curves}
  Let $\mathcal{F} \in \Coh(C)$ be a coherent sheaf. Then $\mathcal{F}$ splits into a direct sum $\mathcal{F} \iso T \oplus \mathcal{E}$, where $T$ is a torsion coherent sheaf and $\mathcal{E}$ is a locally free sheaf.
\end{lemma}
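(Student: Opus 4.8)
The plan is to peel off the torsion part of $\mathcal{F}$ as a subsheaf and then show that the resulting extension splits. First I would let $T \subset \mathcal{F}$ be the torsion subsheaf, i.e.\ the subsheaf whose sections over an open set are exactly those killed by some nonzero function. By construction $T$ is a torsion coherent sheaf, and the quotient $\mathcal{E} := \mathcal{F}/T$ is torsion-free. Since $C$ is smooth, each local ring $\OO_{C,x}$ is a discrete valuation ring, and a finitely generated torsion-free module over a DVR is free; hence $\mathcal{E}$ is locally free. This yields a short exact sequence $0 \to T \to \mathcal{F} \to \mathcal{E} \to 0$, and it remains to split it.

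Next I would show the sequence splits by proving that its extension class in $\Ext^{1}_{C}(\mathcal{E}, T)$ vanishes. Because $\mathcal{E}$ is locally free, the local $\mathcal{E}\mathrm{xt}$ sheaves $\mathcal{E}\mathrm{xt}^{i}(\mathcal{E}, T)$ vanish for $i > 0$, so the local-to-global spectral sequence collapses and gives $\Ext^{1}_{C}(\mathcal{E}, T) \iso H^{1}(C, \mathcal{E}^{\vee} \otimes T)$. But $\mathcal{E}^{\vee} \otimes T$ is again a torsion sheaf, hence supported on a finite set of points; a sheaf with $0$-dimensional support is a direct sum of skyscraper-supported pieces and so has vanishing higher cohomology. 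Therefore $\Ext^{1}_{C}(\mathcal{E}, T) = 0$, the extension class is zero, and we obtain the desired splitting $\mathcal{F} \iso T \oplus \mathcal{E}$.

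This argument is entirely routine, so there is no real obstacle. The only place where smoothness of $C$ is used is the identification of torsion-free coherent sheaves with locally free ones; properness is not needed at all. If anything, the one step worth stating carefully is the $\Ext$ vanishing, since it is what upgrades the local splitting (which always holds over a DVR, where finitely generated modules decompose as free plus torsion) to a global one. An alternative would be to argue stalk-by-stalk and then globalize by hand, but this merely repackages the same cohomological input, so the $\Ext$-theoretic formulation is the cleanest route.
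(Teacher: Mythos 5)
Your proof is correct and complete; the paper itself gives no argument here (the lemma is explicitly left as an exercise), and yours is precisely the intended standard one: peel off the torsion subsheaf $T$, note the quotient $\mathcal{E} = \mathcal{F}/T$ is torsion-free and hence locally free because the local rings of a smooth curve are DVRs, and split the extension via $\Ext^1_C(\mathcal{E}, T) \iso H^1(C, \mathcal{E}^{\vee} \otimes T) = 0$, which holds since a coherent sheaf with zero-dimensional support is flasque (or is pushed forward from a finite affine scheme) and so has no higher cohomology. Your side remarks are also accurate: smoothness enters only through ``torsion-free implies locally free,'' and properness of $C$ plays no role in this lemma.
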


For convenience of reference we also formulate a trivial consequence of Lemma~\ref{lem:objects_on_curves_split}:

\begin{lemma}
  \label{lem:kernel_cokernel_in_cone}
  Let $T \subset \Dbcoh(C)$ be a triangulated subcategory closed under direct summands, and let $F, G$ be two coherent sheaves on $C$. If $F$ and $G$ lie in the subcategory $T$, then for any morphism $\phi\colon F \to G$ both kernel and cokernel of $\phi$ are objects in $T$.
\end{lemma}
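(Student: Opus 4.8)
The plan is to form the cone of $\phi$ and read off its cohomology sheaves, using the splitting of Lemma~\ref{lem:objects_on_curves_split}. First I would regard $\phi\colon F \to G$ as a morphism in $\Dbcoh(C)$ between two objects concentrated in cohomological degree zero, and complete it to a distinguished triangle $F \to G \to \Cone(\phi) \to F[1]$. Since $F$ and $G$ lie in $T$ and $T$ is a triangulated subcategory, the cone $\Cone(\phi)$ lies in $T$ as well.

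Next I would compute the cohomology of $\Cone(\phi)$. The cone is represented by the two-term complex $[F \to G]$ with $F$ placed in degree $-1$ and $G$ in degree $0$, so its only possibly nonzero cohomology sheaves are $\mathcal{H}^{-1}(\Cone(\phi)) \iso \ker\phi$ and $\mathcal{H}^{0}(\Cone(\phi)) \iso \coker\phi$. Applying Lemma~\ref{lem:objects_on_curves_split} to the object $\Cone(\phi)$ then yields a splitting $\Cone(\phi) \iso \ker\phi[1] \oplus \coker\phi$.

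Finally, because $T$ is closed under taking direct summands, both $\ker\phi[1]$ and $\coker\phi$ belong to $T$; shifting $\ker\phi[1]$ by $[-1]$, which is legitimate since $T$ is triangulated, shows that $\ker\phi$ itself is an object of $T$, as is $\coker\phi$. There is essentially no serious obstacle here, which is why the statement is labeled a trivial consequence: the only point requiring care is the identification of the two cohomology sheaves of the cone with $\ker\phi$ and $\coker\phi$ in the correct degrees, after which the formality statement of Lemma~\ref{lem:objects_on_curves_split} and the two closure properties of $T$ immediately finish the argument.
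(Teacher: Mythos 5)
Your proposal is correct and follows exactly the paper's own argument: take the cone of $\phi$, which lies in $T$ since $T$ is triangulated, identify it via Lemma~\ref{lem:objects_on_curves_split} with $\ker(\phi)[1] \oplus \coker(\phi)$, and conclude by closure under direct summands and shifts. The only difference is that you spell out the identification of the cohomology sheaves of the two-term complex, which the paper leaves implicit.
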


\begin{proof}
  Since $T$ is a triangulated subcategory, the cone $\Cone(\phi) \in \Dbcoh(C)$ lies in~$T$. By Lemma~\ref{lem:objects_on_curves_split} the cone is isomorphic to a direct sum of $\ker(\phi)[1]$ and $\coker(\phi)$. By assumption the subcategory~$T$ is closed under passing to direct summands, so both $\ker(\phi)$ and $\coker(\phi)$ belong to $T$.
\end{proof}

We will use the Riemann--Roch formula in the following form. It can be easily deduced from the Hirzebruch--Riemann--Roch theorem. Here the notation $\mu(E)$ refers to the \emph{slope} of the bundle $E$, defined as the quotient $\deg(E) / \rk(E)$.

\begin{theorem}
  \label{thm:riemann_roch}
  Let $E$ and $F$ be two non-zero vector bundles on $C$. Then 
  \[ \chi(E, F) = \rk(E) \rk(F) ( 1 - g - \mu(E) + \mu(F)). \]
  In particular, if $\chi(E, F) = 0$, then $\mu(F) = \mu(E) + g - 1$.
\end{theorem}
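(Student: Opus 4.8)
The plan is to reduce the statement to the ordinary Riemann--Roch theorem for a single vector bundle, applied to the sheaf $E^{\vee} \otimes F$. First I would recall that for locally free sheaves one has a canonical identification $\Ext^{i}(E, F) \caniso H^{i}(C, E^{\vee} \otimes F)$ for every $i$, since $E^{\vee} \otimes F$ is itself a vector bundle and $\RHom(E, F) \iso \RGamma(C, E^{\vee} \otimes F)$. Summing with signs, this gives $\chi(E, F) = \chi(E^{\vee} \otimes F)$, where the right-hand side denotes the Euler characteristic of the sheaf cohomology. (On a curve only $i = 0, 1$ contribute, by the homological dimension one property already invoked in Lemma~\ref{lem:objects_on_curves_split}, but this observation is not strictly needed for the computation.)

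Next I would apply the classical Riemann--Roch formula $\chi(V) = \deg(V) + \rk(V)(1 - g)$, valid for any vector bundle $V$ on a smooth projective curve of genus $g$, to the bundle $V = E^{\vee} \otimes F$. This converts the problem into a purely numerical computation of the rank and degree of $E^{\vee} \otimes F$. The rank is multiplicative, $\rk(E^{\vee} \otimes F) = \rk(E)\rk(F)$, and the degree satisfies the Leibniz-type rule $\deg(E^{\vee} \otimes F) = \rk(F)\deg(E^{\vee}) + \rk(E)\deg(F) = -\rk(F)\deg(E) + \rk(E)\deg(F)$, using $\deg(E^{\vee}) = -\deg(E)$.

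Substituting these into Riemann--Roch yields
\[
  \chi(E, F) = -\rk(F)\deg(E) + \rk(E)\deg(F) + \rk(E)\rk(F)(1 - g),
\]
and factoring out $\rk(E)\rk(F)$ (both positive, since $E$ and $F$ are non-zero) turns the degree terms into slopes $\mu(E) = \deg(E)/\rk(E)$ and $\mu(F) = \deg(F)/\rk(F)$, giving exactly $\chi(E, F) = \rk(E)\rk(F)(1 - g - \mu(E) + \mu(F))$. For the final assertion, if $\chi(E, F) = 0$ then the nonvanishing of $\rk(E)\rk(F)$ forces the bracketed factor to vanish, i.e.\ $\mu(F) = \mu(E) + g - 1$.

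Since each ingredient is standard, there is no genuine obstacle here; the only point requiring minor care is the bookkeeping in the degree-of-tensor-product formula and the sign in $\deg(E^{\vee}) = -\deg(E)$, both of which follow from the splitting principle or from the multiplicativity of Chern classes. I would therefore present the argument compactly, treating the identification of $\Ext$ groups with sheaf cohomology of $E^{\vee} \otimes F$ as the one conceptual step and the rest as arithmetic.
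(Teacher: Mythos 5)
Your proof is correct and follows exactly the route the paper indicates: the paper gives no written proof, only the remark that the formula is ``easily deduced from the Hirzebruch--Riemann--Roch theorem,'' and your reduction via $\chi(E,F) = \chi(E^{\vee} \otimes F)$ together with the rank and degree computation for $E^{\vee} \otimes F$ is precisely that standard deduction. All steps (the identification $\Ext^{i}(E,F) \caniso H^{i}(C, E^{\vee}\otimes F)$ for locally free sheaves, the degree formula, and the final factoring into slopes) are sound.
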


One of many unique features of curves is that it is very, very hard for two objects in $\Dbcoh(C)$ to be semi-orthogonal. In particular, the following property holds, which is the ``easy'' part of Theorem~\ref{thm:generators_in_curves_criterion}.

\begin{lemma}[{\cite[Lem.~8.3]{seshadri}}]
  \label{lem:semiorthogonality_implies_semistability}
  Let $E, F \in \Dbcoh(C)$ be two nonzero objects. If they are semi-orthogonal, i.e., if~$\Ext^{\bullet}_{\Dbcoh(C)}(E, F) = 0$, then both $E$ and $F$ are semistable objects.
\end{lemma}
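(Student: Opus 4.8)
The plan is to reduce the statement to a question about pairs of vector bundles and then to derive a numerical contradiction from the Riemann--Roch formula applied to a destabilizing subbundle. First I would invoke Lemma~\ref{lem:objects_on_curves_split} to split $E \cong \bigoplus_i \mathcal{H}^i(E)[-i]$ and $F \cong \bigoplus_j \mathcal{H}^j(F)[-j]$. Since $\Ext^{\bullet}(E, F) = \bigoplus_{i,j} \Ext^{\bullet + i - j}(\mathcal{H}^i(E), \mathcal{H}^j(F))$ and a direct sum vanishes if and only if each summand does, the hypothesis is equivalent to $\Ext^{\bullet}(\mathcal{H}^i(E), \mathcal{H}^j(F)) = 0$ for every pair $(i,j)$. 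Applying Lemma~\ref{lem:coherent_sheaves_on_curves} to each cohomology sheaf and collecting the torsion and locally free parts into sheaves $\mathcal{T}_E, \mathcal{V}_E, \mathcal{T}_F, \mathcal{V}_F$, the same splitting of $\Hom$- and $\Ext$-groups shows that each of $\mathcal{T}_E, \mathcal{V}_E$ is semi-orthogonal to each of $\mathcal{T}_F, \mathcal{V}_F$.

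Next I would separate the torsion from the locally free behaviour. By Serre duality $\Ext^1(\mathcal{T}_E, \mathcal{V}_F) \cong \Hom(\mathcal{V}_F, \mathcal{T}_E \otimes \omega)^{*}$, which is nonzero as soon as both $\mathcal{T}_E$ and $\mathcal{V}_F$ are nonzero, because a positive-rank bundle surjects onto a skyscraper supported at any point of $\supp(\mathcal{T}_E)$; likewise $\Hom(\mathcal{V}_E, \mathcal{T}_F) \neq 0$ whenever both factors are nonzero. Hence semi-orthogonality forces $\mathcal{T}_E = 0$ or $\mathcal{V}_F = 0$, and $\mathcal{V}_E = 0$ or $\mathcal{T}_F = 0$. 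Chasing these two dichotomies (and using that $E$ and $F$ are nonzero) yields exactly two possibilities: either both $E$ and $F$ are torsion objects, in which case they are semistable by definition and we are done, or both are locally free objects, i.e.\ all their cohomology sheaves are vector bundles.

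It remains to treat the locally free case. Here every $\mathcal{H}^i(E)$ and $\mathcal{H}^j(F)$ is a nonzero bundle with $\chi(\mathcal{H}^i(E), \mathcal{H}^j(F)) = \dim\Hom - \dim\Ext^1 = 0$, so Theorem~\ref{thm:riemann_roch} gives $\mu(\mathcal{H}^j(F)) = \mu(\mathcal{H}^i(E)) + g - 1$ for all $i, j$; in particular all cohomology bundles of $E$ share one slope $a$ and all those of $F$ share the slope $a + g - 1$. To see that each is semistable, fix a pair $E_0 = \mathcal{H}^i(E)$, $F_0 = \mathcal{H}^j(F)$ and suppose $E_0$ is not semistable. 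Let $S \subset E_0$ be its maximal destabilizing subbundle, so $\mu(S) > a$. The exact sequence $0 \to S \to E_0 \to E_0/S \to 0$ together with $\Ext^2 = 0$ on a curve yields a surjection $\Ext^1(E_0, F_0) \twoheadrightarrow \Ext^1(S, F_0)$, whence $\Ext^1(S, F_0) = 0$ and $\chi(S, F_0) = \dim\Hom(S, F_0) \geq 0$. But Riemann--Roch computes $\chi(S, F_0) = \rk(S)\,\rk(F_0)\,(a - \mu(S)) < 0$, a contradiction; thus $E_0$ is semistable. The mirror-image argument, applied to the minimal destabilizing quotient $F_0 \twoheadrightarrow Q$ and the surjection $\Ext^1(E_0, F_0) \twoheadrightarrow \Ext^1(E_0, Q)$, shows $F_0$ is semistable, and assembling over all $i, j$ proves that $E$ and $F$ are semistable objects. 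The one genuinely nontrivial step is this last one: the trick of harvesting the vanishing of $\Ext^1$ on the destabilizing piece from the long exact sequence and then reading off a sign contradiction from Riemann--Roch. Everything else is bookkeeping, and notably none of this requires the hard, deformation-theoretic direction of Faltings' lemma.
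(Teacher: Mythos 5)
Your proof is correct, and its skeleton matches the paper's: formality (Lemma~\ref{lem:objects_on_curves_split}), the torsion/locally-free split (Lemma~\ref{lem:coherent_sheaves_on_curves}) with the two Serre-duality dichotomies, and then a Riemann--Roch sign argument on a destabilizing piece. Where you genuinely diverge is in how the contradiction is extracted in the locally free case. The paper takes a destabilizing \emph{quotient} $E \twoheadrightarrow E^{\prime\prime}$ with $\mu(E^{\prime\prime}) < \mu(E)$, notes $\chi(E^{\prime\prime}, F) > \chi(E, F) = 0$, and so produces an actual nonzero morphism $E^{\prime\prime} \to F$ whose composition with the epimorphism $E \to E^{\prime\prime}$ contradicts $\Hom(E, F) = 0$; semistability of $F$ is then obtained for free by Serre duality, applying the same argument to the pair $(F, E \otimes \omega_C)$. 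You instead take the maximal destabilizing \emph{subbundle} $S \subset E_0$ and exploit that on a curve ($\Ext^2 = 0$) the long exact sequence makes $\Ext^1$-vanishing pass to subobjects in the first argument, so $\chi(S, F_0) = \dim \Hom(S, F_0) \geq 0$ against the Riemann--Roch value $\rk(S)\,\rk(F_0)\,(a - \mu(S)) < 0$; and you handle $F_0$ by the mirror argument on a minimal destabilizing quotient rather than by a second appeal to Serre duality. Both routes are sound and of comparable length: the paper's has the small aesthetic advantage of contradicting semiorthogonality by exhibiting a concrete morphism and of running the destabilization argument only once, while yours isolates a reusable observation (on a curve, semiorthogonality is inherited, at least in $\Ext^1$, by subobjects of $E$ and quotients of $F$) and keeps the two halves symmetric. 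One cosmetic slip: in the locally free case you should say every \emph{nonzero} cohomology sheaf is a bundle of the stated slope, since some $\mathcal{H}^i$ may vanish, but this affects nothing.
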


\begin{proof}
  Since both $E$ and $F$ are finite direct sums of shifts of coherent sheaves, the semi-orthogonality implies that
  \[ \Ext^{\bullet}_{\Dbcoh(C)}(\oplus_{i \in \ZZ} \mathcal{H}^{i}(E), \oplus_{i \in \ZZ} \mathcal{H}^{i}(F)) = 0, \]
  thus by replacing $E$ with $\oplus \mathcal{H}^{i}(E)$ (and similarly for $F$) we may assume that $E$ and $F$ are both coherent sheaves and both $\Hom(E, F)$ and $\Ext^1(E, F)$ vanish.

  By Lemma~\ref{lem:coherent_sheaves_on_curves} we can split $E \iso E_T \oplus E_V$ and $F \iso F_T \oplus F_V$, where $E_T$ and $F_T$ are torsion coherent sheaves and $E_V$ and $F_V$ are vector bundles on $C$. Note that any nonzero vector bundle admits a nonzero map into any nonzero torsion coherent sheaf. Thus the vanishing of
  \[ \Hom(E_V, F_T) \subset \Hom(E, F) = 0 \]
  implies that at most one of the sheaves $E_V$ and $F_T$ can be non-zero. Similarly, by Serre duality we see that
  \[ \Hom(F_V, E_T \otimes \omega_C)^\dual \caniso \Ext^1(E_T, F_V) \subset \Ext^1(E, F) = 0, \]
  so at most one of the sheaves $E_T$ and $F_V$ can be non-zero. Thus either we have two torsion sheaves $E = E_T$ and $F = F_T$ and we are done with the proof, or both $E$ and $F$ are in fact vector bundles.

  Assume that $E$ is a vector bundle which is not semistable. Then there exists a destabilizing short exact sequence
  \[ 0 \to E^\prime \to E \to E^{\prime \prime} \to 0, \]
  with the strict inequality $\mu(E^{\prime \prime}) < \mu(E)$ of slopes. By the Riemann--Roch formula (Theorem~\ref{thm:riemann_roch}) this implies that
  \[ \chi(E^{\prime \prime}, F) > \chi(E, F) = \dim \Hom(E, F) - \dim \Ext^1(E, F) = 0. \]
  In particular, since the Euler characteristic $\chi(E^{\prime \prime}, F)$ is a positive number, there exists a nonzero morphism $f\colon E^{\prime \prime} \to F$. But then the composition
  \[ E \to E^{\prime \prime} \xrightarrow{f} F \]
  is also nonzero since $E \to E^{\prime \prime}$ is an epimorphism. This is a contradiction, so $E$ is necessarily semistable.

  The semistability of $F$ can be proved in a similar way, but it also follows formally from Serre duality: the vanishing $\Ext^{\bullet}(E, F) = 0$ is equivalent to the vanishing of $\Ext^{\bullet}(F, E \otimes \omega_C)$, so applying the argument above to the pair of semi-orthogonal vector bundles $F$ and $E \otimes \omega_C$ completes the proof.
\end{proof}

\begin{remark}
  The Riemann--Roch argument in the proof also shows that if $E$ is a semistable object with slope $\lambda \in \QQ$ then $F$ is semistable of slope $\lambda + g - 1$.
\end{remark}

As an obvious consequence we get the following:

\begin{corollary}
  \label{cor:unstable_implies_generator}
  Let $E \in \Dbcoh(C)$ be a non-semistable object. Then $E$ is a generator of the category~$\Dbcoh(C)$.
\end{corollary}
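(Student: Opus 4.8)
The plan is to read this off directly from Lemma~\ref{lem:semiorthogonality_implies_semistability} by unwinding the definition of a generator. Recall that $E$ is a generator of $\Dbcoh(C)$ precisely when $E^\perp = 0$, i.e.\ when the only object $F \in \Dbcoh(C)$ satisfying $\Hom^\bullet_{\Dbcoh(C)}(E, F) = 0$ is the zero object. So I would fix an arbitrary $F \in E^\perp$ and aim to show $F \iso 0$.

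The key step is to notice that the condition $F \in E^\perp$ is literally the semi-orthogonality hypothesis of Lemma~\ref{lem:semiorthogonality_implies_semistability}, since $\Hom^\bullet_{\Dbcoh(C)}(E, F) = \Ext^\bullet_{\Dbcoh(C)}(E, F)$. Arguing by contradiction, suppose $F \neq 0$. A non-semistable object is in particular nonzero, so $E$ is also nonzero, and the pair $(E, F)$ then consists of two nonzero semi-orthogonal objects. Lemma~\ref{lem:semiorthogonality_implies_semistability} forces both $E$ and $F$ to be semistable objects, contradicting the assumption that $E$ is not semistable. Hence $F \iso 0$, so $E^\perp = 0$ and $E$ is a generator.

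There is no real obstacle here: all the substantive work --- the Riemann--Roch computation and the destabilizing-sequence argument --- has already been carried out in the proof of Lemma~\ref{lem:semiorthogonality_implies_semistability}. The corollary is merely the contrapositive of that lemma restated through the definition of $E^\perp$, and the only thing to verify is that the definitions line up, which they do immediately.
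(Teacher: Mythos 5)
Your proof is correct and matches the paper's own argument exactly: the corollary is read off as the contrapositive of Lemma~\ref{lem:semiorthogonality_implies_semistability}, after unwinding the definition of $E^\perp$. The only cosmetic difference is that you phrase it as a contradiction while the paper states it directly, which is immaterial.
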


\begin{proof}
  By definition we need to check that for any nonzero $F \in \Dbcoh(C)$ the graded vector space~$\Ext^{\bullet}(E, F)$ is not zero. Since $E$ is not semistable, this holds by Lemma~\ref{lem:semiorthogonality_implies_semistability}.
\end{proof}

As we mentioned in Theorem~\ref{thm:generators_in_curves_criterion}, an object $E \in \Dbcoh(C)$ is a generator if and only if it is not semistable. Corollary~\ref{cor:unstable_implies_generator} is the easy direction, but the other direction, proving that a semistable object has a nonzero orthogonal in $\Dbcoh(C)$, is incomparably more difficult. This statement was proved by Faltings and it has deep implications for moduli spaces of semistable vector bundles on curves:

\begin{theorem}[{Faltings, unpublished; see~\cite[Lem.~3.1]{seshadri} or \cite[Prop.~3.5.2]{spec-book}}]
  \label{thm:faltings_orthogonality}
  Let $E \in \Dbcoh(C)$ be a semistable object. Then there exists an object $F \in \Dbcoh(C)$ such that~$\Ext^{\bullet}(E, F) = 0$ and $F \neq 0$.
\end{theorem}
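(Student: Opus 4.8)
The plan is to strip the statement down by a chain of reductions to the case of a single \emph{stable} vector bundle, and then to produce the orthogonal $F$ as a general member of a moduli space of bundles, whose basic properties are the genuinely deep input. First I would dispose of the torsion case directly: if $E$ is a torsion object then $\supp(E)$ is finite, so choosing any point $p \in C$ with $p \notin \supp(E)$ and taking $F = \OO_p$, the disjointness of supports gives $\Ext^{\bullet}(E,F) = 0$ with $F \neq 0$. For the remaining case, Lemma~\ref{lem:objects_on_curves_split} lets me replace the semistable object $E \iso \oplus_i \mathcal{H}^{i}(E)[-i]$ by the honest semistable bundle $E' := \oplus_i \mathcal{H}^{i}(E)$ of the common slope $\mu$, since $\Ext^{\bullet}(E,F) = 0$ if and only if $\Ext^{\bullet}(E',F) = 0$. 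Thus it suffices to treat a nonzero semistable bundle $E$ of rank $r$ and slope $\mu$.

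By Riemann--Roch (Theorem~\ref{thm:riemann_roch}) any orthogonal vector bundle $F$ must have slope $\nu := \mu + g - 1$, and then $\chi(E,F) = 0$, so that the single condition $\Hom(E,F) = 0$ already forces $\Ext^1(E,F) = 0$. I would next reduce to $E$ \emph{stable}: passing to a Jordan--H\"older filtration of $E$ with stable quotients $G_1, \dots, G_k$, all of slope $\mu$, the long exact sequences show that any $F$ orthogonal to every $G_j$ is automatically orthogonal to $E$. Granting the stable case, each $G_j$ admits an orthogonal bundle of slope $\nu$, and — taking ranks to be a common multiple and padding by direct sums — each locus $\{[F] : \Hom(G_j, F) = 0\}$ becomes a nonempty open subset of the moduli space of semistable bundles of the appropriate fixed rank and slope $\nu$; invoking the \emph{irreducibility} of that moduli space, the finite intersection is nonempty and produces a common orthogonal $F$. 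This leaves the stable case.

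So let $E$ be stable of rank $r$ and slope $\mu$, and let $M(n,\nu)$ be the moduli space of semistable bundles of rank $n$ and slope $\nu$, with $n$ a large multiple of $r$ (so that $n\nu \in \ZZ$); for $g \geq 2$ this is a nonempty irreducible variety of dimension $n^2(g-1)+1$. The locus of $F$ admitting a nonzero $\phi\colon E \to F$ is closed, and I would bound its dimension by an incidence-variety argument. The image $G = \im(\phi)$ is a quotient of the stable $E$ and a subsheaf of the semistable $F$, so either $\mu < \mu(G) \leq \nu$ with $\rk(G) \leq r$, or $\mu(G) = \mu$, which for stable $E$ forces $\phi$ to be injective with $G \iso E$; in either case $G$ ranges over a bounded family. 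Fibering over the choice of $G$, the surjection $E \twoheadrightarrow G$, and an extension $0 \to G \to F \to Q \to 0$, a Riemann--Roch computation shows that the leading $(g-1)n^2$ contributions cancel and that the dimension of the incidence variety stays strictly below $\dim M(n,\nu)$ once $n \gg 0$: the gap grows linearly in $n$ whenever $\mu(G) > \mu$, and in the single borderline case $G \iso E$ it equals the codimension $(g-1)r^2 + 1$ of the locus of bundles containing $E$ as a subsheaf. Hence for $n \gg 0$ the non-orthogonal locus is a proper closed subset, a general $F \in M(n,\nu)$ satisfies $\Hom(E,F) = 0$, and therefore $\Ext^{\bullet}(E,F) = 0$ with $F \neq 0$.

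The main obstacle is precisely the moduli-theoretic scaffolding on which the final step rests: the existence, irreducibility, and expected dimension of $M(n,\nu)$ for bundles of arbitrary rank — results of Narasimhan--Seshadri and of Seshadri's geometric invariant theory construction (see~\cite{spec-book}) — are themselves deep and constitute the ``deformation theory of vector bundles on curves'' the statement alludes to. Compounding this, the codimension estimate is of expected-codimension-one type and genuinely tight in the borderline case $G \iso E$, so the leading-order cancellation means one must control the bounded correction terms carefully; and the freedom to enlarge the rank $n$ is essential rather than cosmetic, since for small rank the non-orthogonal ``theta'' locus can fill all of $M(n,\nu)$ (Raynaud's examples of semistable bundles whose theta divisor is the entire Jacobian). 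The low-genus cases $g \leq 1$ fall outside this count and would be handled separately and more directly — on $\PP^1$ a semistable bundle is $\OO(a)^{\oplus k}$ and $F = \OO(a-1)$ works, while in genus one one invokes Atiyah's classification.
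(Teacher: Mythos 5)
Your reduction pipeline coincides step for step with the paper's proof: the torsion case via a skyscraper off the support, Lemma~\ref{lem:objects_on_curves_split} to replace $E$ by $\oplus_i \mathcal{H}^{i}(E)$, Riemann--Roch (Theorem~\ref{thm:riemann_roch}) pinning the slope $\nu = \mu + g - 1$, the Jordan--H\"older reduction to stable factors, the rank-padding, and the semicontinuity-plus-irreducibility argument on the moduli space to find a common orthogonal. The divergence is at the last step: the paper deliberately stops there and quotes the stable case from Faltings (\cite[Lem.~3.1]{seshadri}) as the black-box deep input, whereas you attempt to reprove it by a dimension count on $M(n,\nu)$, and that attempt has a genuine gap.

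Two concrete problems. First, the numerics of your borderline case are wrong: since $\chi(E,F) = 0$, the locus $\{F \in M(n,\nu) : \Hom(E,F) \neq 0\}$ is the zero locus of a section of a determinant line bundle (a generalized theta divisor), so whenever it is proper its codimension is exactly $1$, not $(g-1)r^2 + 1$; indeed, with $Q := F/E$ of rank $n - r$ and degree $n\nu - r\mu$ one computes $-\chi(Q, E) = (g-1)\,r\,(2n - r)$, and the parameter count $(g-1)(n-r)^2 + 1 + \bigl[(g-1)r(2n-r) - 1\bigr] = (g-1)n^2$ gives codimension $1$ --- your figure overcounts by the moduli of $E$, which is fixed. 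Second, and fatally: the estimate $\dim \Ext^1(Q, G) = -\chi(Q, G)$ used in the fibering requires $\Hom(Q, G) = 0$, which holds when $Q$ is semistable of larger slope but fails on the unbounded family of unstable or torsion-bearing quotients $Q$, where $\dim\Hom(Q,G)$ grows without bound and can absorb the claimed gap. Nothing in your sketch stratifies by the Harder--Narasimhan type of $Q$, so the count as written would establish properness of the non-orthogonal locus essentially uniformly in $n$ --- contradicting the Raynaud examples you yourself invoke, which show the locus can be all of $M(n,\nu)$ for small $n$. The corrections are therefore not ``bounded'' as you assert; uniform control of the unstable strata for $n \gg 0$ (via Quot-scheme dimension bounds in the style of Popa--Roth and Hein's effective arguments) is precisely the hard content of the known dimension-count proofs and is absent here. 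So the stable case --- the one genuinely deep input --- remains unproven in your write-up; the correct move, and the paper's, is to cite it (\cite[Lem.~3.1]{seshadri} or \cite[Prop.~3.5.2]{spec-book}) and present only the reduction, which you have done correctly.
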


Of course, the original statement was not about objects in the derived category, but only about stable vector bundles. We sketch the reduction to this special case:

\begin{proof}[{Proof (sketch)}]
  It is enough to find $F$ which is semi-orthogonal to each cohomology sheaf of~$E$, so again by replacing~$E$ with $\oplus_{i \in \ZZ} \mathcal{H}^{i}(E)$ we may assume that $E$ is a semistable coherent sheaf. If~$E$ is a torsion sheaf, we can take~$F$ to be the skyscraper sheaf at any point in~$C \backslash \supp(E)$, so from now on let us assume that $E$ is a semistable vector bundle.
  
  By definition $E$ admits a Jordan--Hölder filtration whose associated graded factors, denote them by~$E_1, \dots, E_m$, are all stable vector bundles of slope $\mu := \mu(E)$. By Faltings' result~\cite[Lem.~3.1]{seshadri} for each stable bundle $E_i$ there exists a nonzero vector bundle $F_i$ such that~$\Ext^{\bullet}(E_i, F_i) = 0$. By Lemma~\ref{lem:semiorthogonality_implies_semistability} and Riemann--Roch we know that each $F_i$ is a semistable bundle with slope~$\mu(E) + g - 1$. Then, replacing if necessary each $F_i$ with a direct sum of several copies of $F_i$, we may assume that all bundles $F_i$ have the same rank $r$ and the same degree $d$.

  Now consider the moduli space $\mathcal{M}_{r, d}$ of semistable vector bundles on $C$ with rank $r$ and degree $d$. For each stable factor $E_i$ of $E$ we know that there exists at least one point~$\{ F_i \} \in \mathcal{M}_{r, d}$ of this moduli space such that
  \[ \Hom(E_i, F_i) = 0 \quad \text{and} \quad \Ext^1(E_i, F_i) = 0. \]
  By the upper semicontinuity of cohomology (see, e.g.,~\cite[Thm.~12.8]{hartshorne-ag}) the same vanishing holds for any point of $\mathcal{M}_{r, d}$ in a Zariski-open neighborhood of $\{ F_i \}$. Since $\mathcal{M}_{r, d}$ is known to be irreducible, an intersection of finitely many nonempty open subsets is non-empty, which means that a general semistable vector bundle of rank $r$ and degree $d$ is semi-orthogonal to each stable factor $E_i$ of $E$. Since $E$ is an iterated extension of its stable factors, this implies that a general semistable vector bundle $F \in \mathcal{M}_{r, d}$ of rank $r$ and degree $d$ satisfies~$\Ext^{\bullet}(E, F) = 0$.
\end{proof}

\section{Classical generators on curves}
\label{sec:classical_generators}

By Theorem~\ref{thm:generators_in_curves_criterion} a semistable object $E \in \Dbcoh(C)$ is not a generator, which implies that~$E$ is certainly not a classical generator. This weaker statement is, in fact, not hard to prove directly. For example, if $E$ is a torsion object, then the full subcategory consisting of torsion objects in $\Dbcoh(C)$ is clearly triangulated (exercise: prove this), contains $E$, and is not the whole category, so $\langle E \rangle$ cannot be equal to $\Dbcoh(C)$. For a semistable object $E$ of finite slope~$\lambda \in \QQ$ we will follow a similar strategy: below, in Corollary~\ref{cor:semistables_form_triangulated_subcategory}, we construct a triangulated subcategory of $\Dbcoh(C)$ that contains $E$ and is clearly not equal to $\Dbcoh(C)$. We start with a lemma which will also be used for the proof of Theorem~\ref{thm:main_example}.

\begin{lemma}
  \label{lem:serre_subcategory_implies_triangulated}
  Let $\cA$ be an abelian category, and let $\mathcal{S} \subset \cA$ be a full subcategory satisfying two properties:
  \begin{itemize}
  \item if $\phi\colon A \to A^\prime$ is a morphism between two objects in $\mathcal{S}$, then both $\ker(\phi)$ and $\mathrm{coker}(\phi)$ lie in $\mathcal{S}$;
  \item for any short exact sequence $0 \to A \to B \to A^\prime \to 0$ with $A, A^\prime \in \mathcal{S}$ the middle object $B$ also lies in $\mathcal{S}$.
  \end{itemize}
  Then the subcategory $D^{b}_{\mathcal{S}}(\cA) \subset D^{b}(\cA)$ of objects $E \in D^{b}(\cA)$ whose cohomology sheaves belong to $\mathcal{S}$ is a triangulated subcategory. In particular, an object of $\mathcal{S}$ cannot be a classical generator of $D^{b}(\cA)$ unless $\mathcal{S} = \cA$.
\end{lemma}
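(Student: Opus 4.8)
The plan is to check that $D^{b}_{\mathcal{S}}(\cA)$ satisfies the two closure properties defining a full triangulated subcategory---closure under shifts and closure under cones---and then to deduce the final clause from the observation that $\mathcal{S}$, and hence $D^{b}_{\mathcal{S}}(\cA)$, is closed under direct summands.

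Closure under shifts is immediate: since $\mathcal{H}^{i}(E[n]) = \mathcal{H}^{i+n}(E)$, shifting only relabels the cohomology objects, all of which remain in $\mathcal{S}$. The substance of the argument is closure under cones. Given a distinguished triangle $E \to F \to G \to E[1]$ with $E, F \in D^{b}_{\mathcal{S}}(\cA)$, I would write down the long exact cohomology sequence
\[ \cdots \to \mathcal{H}^{i}(E) \xrightarrow{a_i} \mathcal{H}^{i}(F) \to \mathcal{H}^{i}(G) \to \mathcal{H}^{i+1}(E) \xrightarrow{a_{i+1}} \mathcal{H}^{i+1}(F) \to \cdots \]
and extract from it the short exact sequence
\[ 0 \to \coker(a_i) \to \mathcal{H}^{i}(G) \to \ker(a_{i+1}) \to 0. \]
Each $a_j$ is a morphism between objects of $\mathcal{S}$, so the kernel--cokernel property of $\mathcal{S}$ gives $\coker(a_i), \ker(a_{i+1}) \in \mathcal{S}$, and the extension-closedness property then yields $\mathcal{H}^{i}(G) \in \mathcal{S}$. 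As this holds for every $i$, the object $G$ lies in $D^{b}_{\mathcal{S}}(\cA)$, completing the proof that the subcategory is triangulated.

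For the final assertion, the extra ingredient is that $\mathcal{S}$ is automatically closed under direct summands. Indeed, if $A \oplus B \in \mathcal{S}$, the idempotent endomorphism of $A \oplus B$ projecting onto the second summand is a morphism between objects of $\mathcal{S}$ whose kernel is $A$; by the kernel--cokernel property, $A \in \mathcal{S}$. It follows that $D^{b}_{\mathcal{S}}(\cA)$ is closed under direct summands as well, since the cohomology of a summand is a summand of the cohomology. Hence, for any $G \in \mathcal{S}$---viewed as a complex concentrated in degree zero, so that $G \in D^{b}_{\mathcal{S}}(\cA)$---the subcategory $D^{b}_{\mathcal{S}}(\cA)$ is a triangulated subcategory closed under summands containing $G$, whence $\langle G \rangle \subseteq D^{b}_{\mathcal{S}}(\cA)$. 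If $G$ were a classical generator we would get $D^{b}_{\mathcal{S}}(\cA) = D^{b}(\cA)$, forcing every object of $\cA$ (again in degree zero) into $\mathcal{S}$, i.e. $\mathcal{S} = \cA$.

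I do not anticipate a serious obstacle. The one step needing care is reading the short exact sequence for $\mathcal{H}^{i}(G)$ off the long exact sequence and confirming that the relevant maps are exactly the $a_j$ induced by $E \to F$, so that the two hypotheses on $\mathcal{S}$ apply verbatim; the closure of $\mathcal{S}$ under summands is the only mildly non-obvious point, and it drops out of the kernel--cokernel property applied to a projector.
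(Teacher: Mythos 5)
Your proof is correct and follows essentially the same route as the paper: closure under shifts is immediate, and closure under cones is extracted from the long exact cohomology sequence via the short exact sequence $0 \to \coker(a_i) \to \mathcal{H}^{i}(G) \to \ker(a_{i+1}) \to 0$, exactly as in the paper's argument. Your only addition is to spell out the \enquote{in particular} clause --- observing via the projector trick that $\mathcal{S}$, hence $D^{b}_{\mathcal{S}}(\cA)$, is closed under direct summands, so that $\langle G \rangle \subseteq D^{b}_{\mathcal{S}}(\cA)$ --- a point the paper leaves implicit, and your justification of it is correct.
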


\begin{remark}
  A subcategory $\mathcal{S} \subset \cA$ with those two properties is called a \textit{weak Serre subcategory} in~\cite[Tag~02MN]{stacks-project}.
\end{remark}

\begin{proof}
  The subcategory $D^{b}_{\mathcal{S}}(\cA)$ is clearly closed under shifts, so it remains to check that for any morphism $f\colon E \to E^\prime$ between objects $E, E^\prime \in D^{b}_{\mathcal{S}}(\cA)$ the cone $\Cone(f) \in D^{b}(\cA)$ also belongs to $D^{b}_{\mathcal{S}}(\cA)$. Consider a fragment of the long exact sequence of cohomology sheaves for the object~$\Cone(f)$:
  \[ \dots \to \mathcal{H}^{i}(E) \xrightarrow{\mathcal{H}^{i}(f)} \mathcal{H}^{i}(E^\prime) \to \mathcal{H}^{i}(\Cone(f)) \to \mathcal{H}^{i+1}(E) \xrightarrow{\mathcal{H}^{i+1}(f)} \mathcal{H}^{i+1}(E^\prime) \to \dots \]
  All sheaves except the middle one lie in the subcategory $\mathcal{S}$ by assumption. Thus the short exact sequence
  \[ 0 \to \mathrm{coker}(\mathcal{H}^{i}(f)) \to \mathcal{H}^{i}(\Cone(f)) \to \ker(\mathcal{H}^{i+1}(f)) \to 0, \]
  shows that $\mathcal{H}^{i}(\Cone(f))$ is also an object in the subcategory $\mathcal{S}$. This works for any $i \in \ZZ$, so by definition $\Cone(f)$ belongs to $D^{b}_{\mathcal{S}}(\cA)$.
\end{proof}

\begin{corollary}
  \label{cor:semistables_form_triangulated_subcategory}
  Let $\lambda \in \QQ$ be a rational number. Then the full subcategory $D_\lambda \subset \Dbcoh(C)$ of semistable objects of slope $\lambda$ is a triangulated subcategory. In particular, a semistable object of slope $\lambda$ is not a classical generator of $\Dbcoh(C)$.
\end{corollary}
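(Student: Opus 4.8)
The plan is to deduce everything from Lemma~\ref{lem:serre_subcategory_implies_triangulated}. Take $\cA = \Coh(C)$ and let $\mathcal{S} \subset \Coh(C)$ be the full subcategory of semistable vector bundles of slope $\lambda$. By the definition of a semistable object of slope $\lambda$, the subcategory $D_\lambda$ is exactly the subcategory $D^b_{\mathcal{S}}(\Coh(C))$ of complexes whose cohomology sheaves lie in $\mathcal{S}$. Thus, once I verify that $\mathcal{S}$ is a weak Serre subcategory, i.e. that it satisfies the two bullet points of Lemma~\ref{lem:serre_subcategory_implies_triangulated}, the lemma immediately gives that $D_\lambda$ is a triangulated subcategory, and the only remaining task is to package the ``in particular'' clause.

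To check the kernel--cokernel property, let $\phi\colon A \to A'$ be a morphism in $\mathcal{S}$ and set $I = \im(\phi)$. As a quotient of the semistable bundle $A$ of slope $\lambda$ we have $\mu(I) \geq \lambda$, and as a subsheaf of the semistable bundle $A'$ we have $\mu(I) \leq \lambda$, so $\mu(I) = \lambda$. On a smooth curve the kernel $\ker(\phi) \subset A$ is locally free, its slope equals $\lambda$ by additivity of rank and degree in the sequence relating $\ker(\phi)$, $A$, and $I$, and it is semistable since any subsheaf of it is also a subsheaf of $A$. The one genuinely non-formal point is that $\coker(\phi) = A'/I$ is torsion-free: if $T \subset A'/I$ were its torsion subsheaf, its preimage $\tilde{I} \subset A'$ would be a subsheaf with the same rank as $I$ but strictly larger degree, contradicting $\mu(\tilde{I}) \leq \lambda = \mu(I)$; hence $T = 0$, so $\coker(\phi)$ is a vector bundle, and it is semistable of slope $\lambda$ as a quotient of $A'$. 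For the extension property, given a short exact sequence $0 \to A \to B \to A' \to 0$ with $A, A' \in \mathcal{S}$, the middle term $B$ is locally free of slope $\lambda$, and for any subsheaf $F \subset B$ one compares $F \cap A \subset A$ with the image of $F$ in $A'$ to see that $\mu(F)$ is a weighted average of two quantities each at most $\lambda$; hence $B$ is semistable. This is exactly the classical statement that semistable bundles of a fixed slope form an abelian subcategory of $\Coh(C)$.

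Finally, for the last sentence: the subcategory $\mathcal{S}$ is closed under direct summands, since a summand of a semistable bundle of slope $\lambda$ is again one, so $D_\lambda$ is a triangulated subcategory closed under summands, and it is proper because it contains no nonzero torsion sheaf. Therefore any semistable object $E$ of slope $\lambda$ satisfies $\langle E \rangle \subseteq D_\lambda \subsetneq \Dbcoh(C)$ and cannot be a classical generator. I expect the only step requiring genuine care to be the torsion-freeness of the cokernel, where semistability of $A'$ is used in an essential way; the slope and semistability bookkeeping for kernels, cokernels, and extensions is otherwise routine.
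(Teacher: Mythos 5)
Your proposal is correct and follows exactly the paper's route: the paper likewise deduces the corollary from Lemma~\ref{lem:serre_subcategory_implies_triangulated} applied to the subcategory of semistable bundles of slope $\lambda$, citing the kernel--cokernel and extension closure properties as ``standard facts about semistable sheaves on curves.'' The only difference is that you spell out these standard facts (the torsion-freeness of the cokernel and the weighted-average slope argument for extensions, both correctly), which the paper leaves to the reader.
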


\begin{proof}
  By Lemma~\ref{lem:serre_subcategory_implies_triangulated} it is enough to check two properties:
  \begin{itemize}
  \item if $\phi\colon \mathcal{E} \to \mathcal{E}^\prime$ is a morphism between semistable vector bundles of slope $\lambda$, then both $\ker(\phi)$ and $\mathrm{coker}(\phi)$ are semistable vector bundles of slope $\lambda$;
  \item an extension of two semistable vector bundles of slope $\lambda$ is also semistable of slope $\lambda$.
  \end{itemize}
  Both properties are standard facts about semistable sheaves on curves.
\end{proof}

\begin{remark}
  For any semistable object $E \in \Dbcoh(C)$ of finite slope $\lambda \in \QQ$ the subcategory~$\langle E \rangle \subset \Dbcoh(C)$ classically generated by $E$ is usually smaller than $D_\lambda$. For example, if $C$ is an elliptic curve and $E$ is the trivial line bundle, it is easy to see that $\langle E \rangle$ does not contain any other line bundle of degree zero.
\end{remark}

Some non-semistable objects generate everything. For example, a direct sum of a torsion sheaf with a non-trivial vector bundle is always a classical generator:

\begin{lemma}
  \label{lem:torsion_plus_bundle_generates_everything}
  Let $E \in \Dbcoh(C)$. If $E$ is neither a torsion object nor a locally free object, then~$E$ is a classical generator of $\Dbcoh(C)$.
\end{lemma}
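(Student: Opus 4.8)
The plan is to show that the classically generated subcategory $\langle E\rangle\subset\Dbcoh(C)$ contains the structure sheaf $\OO_C$ together with the skyscraper sheaf $k(q)$ at \emph{every} point $q\in C$; once this is achieved, a short bootstrap produces every coherent sheaf and hence every object. The only tool needed at each step is Lemma~\ref{lem:kernel_cokernel_in_cone}, which keeps kernels and cokernels of maps between sheaves inside a triangulated subcategory closed under direct summands. To begin, since $\langle E\rangle$ is closed under shifts and summands, Lemma~\ref{lem:objects_on_curves_split} and Lemma~\ref{lem:coherent_sheaves_on_curves} let me extract from $E$ two honest coherent sheaves lying in $\langle E\rangle$: a nonzero torsion sheaf $T_0$ (because $E$ is not locally free) and a nonzero vector bundle $V$ (because $E$ is not torsion).

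The heart of the argument is to produce a skyscraper at \emph{every} point. First I extract one: decomposing $T_0$ over its support I may assume it is supported at a single point $p_0$, and then the cokernel of multiplication by a local uniformizer $T_0\to T_0$ is a nonzero sum of copies of $k(p_0)$; by Lemma~\ref{lem:kernel_cokernel_in_cone} and closure under summands, $k(p_0)\in\langle E\rangle$. Writing $V(D):=V\otimes\OO_C(D)$, the kernel of the restriction $V\to V|_{p_0}$ (whose target is a sum of copies of $k(p_0)$, hence in $\langle E\rangle$) is $V(-p_0)$, and the extension $0\to V\to V(p_0)\to V|_{p_0}\to 0$ realizes $V(p_0)$ as a cone; inductively every twist $V(kp_0)$ lies in $\langle E\rangle$. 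Now fix any $q\neq p_0$. By Riemann--Roch the line bundle $\OO_C(kp_0-q)$ has a nonzero section once $k$ is large, i.e.\ there is a nonzero rational function with a pole only at $p_0$ that vanishes at $q$; viewing it as a section $f$ of $\OO_C(kp_0)$ and tensoring with $V$ gives an injection $\phi_f\colon V\to V(kp_0)$ between objects of $\langle E\rangle$ whose cokernel is a torsion sheaf supported on the zero divisor of $f$. Since that divisor contains $q$, the summand of $\coker(\phi_f)$ supported at $q$ is a nonzero torsion sheaf, from which I extract $k(q)$ exactly as before. Thus $k(q)\in\langle E\rangle$ for all $q\in C$, and consequently every torsion sheaf, being an iterated extension of skyscrapers, lies in $\langle E\rangle$.

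It remains to capture all vector bundles. The key sub-step is: if every skyscraper and at least one bundle of rank $r$ lie in $\langle E\rangle$, then \emph{every} rank-$r$ bundle does. Indeed, given a target bundle $W$ of rank $r$, a generic isomorphism $V|_\eta\iso W|_\eta$ over the function field clears denominators to an injection $V\to W$ with torsion cokernel $Q$; filtering $Q$ by skyscrapers exhibits $W$ as an iterated extension of $V$ by skyscrapers, all of which are in $\langle E\rangle$, so $W\in\langle E\rangle$. Applying this with $W=\OO_C^{\oplus r}$ puts $\OO_C^{\oplus r}$, and hence its summand $\OO_C$, into $\langle E\rangle$; applying it again with $r=1$ puts every line bundle into $\langle E\rangle$. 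Since every vector bundle on a curve is an iterated extension of line bundles, all vector bundles lie in $\langle E\rangle$; combined with all torsion sheaves and the splitting $\mathcal{F}\iso T\oplus\mathcal{E}$ of Lemma~\ref{lem:coherent_sheaves_on_curves}, every coherent sheaf lies in $\langle E\rangle$, and finally every object by Lemma~\ref{lem:objects_on_curves_split}. Hence $\langle E\rangle=\Dbcoh(C)$.

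The single genuinely nontrivial step is the spreading of skyscrapers to all of $C$ in the second paragraph: this is where the geometry of the curve enters, through the existence (via Riemann--Roch) of a rational function with a prescribed pole at $p_0$ and a prescribed zero at $q$. Everything else is a disciplined application of Lemma~\ref{lem:kernel_cokernel_in_cone} together with standard facts about elementary (Hecke) modifications and flags of subbundles, so I expect no serious obstacle there.
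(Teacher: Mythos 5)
Your route is genuinely different from the paper's, and with one repair it works. The paper funnels everything through the single classical generator $\OO_C \oplus \OO_p$: it extracts one skyscraper from the torsion part via the derived tensor product $T \otimes^{L} \OO_p$ and the projection formula, builds the twists $\OO_C(-mp)$, invokes the Beilinson-type classical generator of Example~\ref{ex:classical_generators_exist}, and handles an arbitrary bundle $\mathcal{E}$ by the trace splitting of $\OO_C \to \mathcal{E} \otimes \mathcal{E}^\dual$ (this is where characteristic zero enters, and this trick is what later yields the generating-time bound of Lemma~\ref{lem:torsion_plus_bundle_generating_time}). You instead stay entirely inside elementary modifications: you spread skyscrapers to \emph{every} point using sections of $\OO_C(kp_0 - q)$ supplied by Riemann--Roch, capture all torsion sheaves as iterated extensions, and then capture all bundles via generic isomorphisms and line-bundle filtrations. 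Your argument avoids both Example~\ref{ex:classical_generators_exist} and the trace trick (in particular it would work in any characteristic), at the price of being strictly one-dimensional --- the paper's extraction of $\OO_p$ is explicitly designed to adapt to zero-dimensional supports on higher-dimensional varieties --- and of giving no useful control on generating time, since your divisor $D$ grows with the target bundle.

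One step is wrong as written: ``a generic isomorphism $V|_\eta \iso W|_\eta$ over the function field clears denominators to an injection $V \to W$ with torsion cokernel.'' On a projective curve there may be no nonzero map $V \to W$ at all --- already $\Hom(\OO_C, \OO_C(-p)) = H^0(\OO_C(-p)) = 0$ --- because clearing denominators would require a global regular function with prescribed zeros, which does not exist. What the rational isomorphism actually gives is a genuine homomorphism $V \to W(D)$ for some effective divisor $D$, equivalently an injection $V(-D) \monoarrow W$ with torsion cokernel. Fortunately your own intermediate results repair this: at that stage every torsion sheaf already lies in $\langle E \rangle$, so $V(-D) = \ker\bigl(V \to V|_D\bigr)$ lies in $\langle E \rangle$ by Lemma~\ref{lem:kernel_cokernel_in_cone}, and then $W$, being an extension of the torsion cokernel by $V(-D)$, lies in $\langle E \rangle$ as well. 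With that substitution (here and in the $r = 1$ step producing all line bundles) the proof is complete. A cosmetic remark: in your sequence $0 \to V \to V(p_0) \to V|_{p_0} \to 0$ the quotient is $V(p_0)|_{p_0}$, still a direct sum of copies of $k(p_0)$, so nothing is affected.
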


\begin{proof}
  Since any $E \in \Dbcoh(C)$ is isomorphic to the direct sum of shifted coherent sheaves, we may assume that $E$ is a single coherent sheaf. By Lemma~\ref{lem:coherent_sheaves_on_curves} the sheaf $E$ splits into a direct sum $T \oplus \mathcal{E}$, where $T$ is a non-zero torsion sheaf and $\mathcal{E}$ is a non-zero vector bundle on $C$.
  
  Let $p \in C$ be a point in the support of $T$. First we claim that the subcategory $\langle T \rangle$ contains the skyscraper sheaf $\OO_p$. Since $C$ is a curve, this can be shown by appealing to the classification of finitely generated modules over a PID, but here we present an argument which can be adapted to work on an arbitrary variety as long as the support of $T$ is zero-dimensional. Consider the short exact sequence
    \begin{equation}
      \label{eq:skyscraper_presentation}
      0 \to \OO_{C}(-p) \to \OO_C \to \OO_p \to 0.
    \end{equation}
  Derived tensor product with $T$ results in the distinguished triangle
  \[ T \otimes \OO_{C}(-p) \to T \to T \otimes^{L} \OO_p \to T \otimes \OO_{C}(-p) [1] \]
  in the derived category $\Dbcoh(C)$. Since $T$ is a torsion sheaf, the line bundle $\OO_{C}(-p)$ is trivial on a neighborhood of the support of $T$, so $T \otimes \OO_{C}(-p) \iso T$. We conclude that $T \otimes^{L} \OO_{p}$ lies in the subcategory $\langle T \rangle$. Note that this object is not zero since its zeroth cohomology sheaf is the underived tensor product $T \otimes \OO_p$ which is not zero under the assumption that $p$ lies in the support of $T$ (see, e.g.,~\cite[Ex.~3.30]{huybrechts-fm}). Denote by $i\colon \{ p \} \monoarrow C$ the inclusion of the point. Then by the projection formula (see, e.g.,~\cite[(3.11)]{huybrechts-fm}) we have
  \[ T \otimes^{L} i_*(\OO_{p}) \iso R i_* (L i^* T \otimes^{L} \OO_p), \]
  where the object $L i^* T \otimes^{L} \OO_p$ is an object in $\Dbcoh(\{p\})$. Any object in the derived category of sheaves on a point is a direct sum of shifts of $\OO_p$. Thus $T \otimes^{L} i_* \OO_p$ is a direct sum of shifts of several copies of the skyscraper sheaf $\OO_p$. By definition $\langle T \rangle$ is closed under shifts and direct summands, so we have confirmed that $\OO_p$ lies in $\langle T \rangle$.

  Thus it is enough to prove that the direct sum $\OO_p \oplus \mathcal{E}$ is a classical generator of $\Dbcoh(C)$ for any non-zero vector bundle $\mathcal{E}$. If $\mathcal{E} \iso \OO_C$, from the short exact sequence~(\ref{eq:skyscraper_presentation}) we see that $\OO_{C}(-p) \in \langle \OO_p \oplus \OO_C \rangle$. By considering the twists of that short exact sequence we can inductively show that the line bundles $\OO_{C}(-2 p), \OO_{C}(-3 p), \dots$ all lie in $\langle \OO_p \oplus \OO_C \rangle$, and we conclude by Example~\ref{ex:classical_generators_exist} since for some $n > 0$ the line bundle $\OO_{C}(n p)$ is very ample and the sequence above includes arbitrarily many powers of the anti-very ample line bundle.
  
  The case of an arbitrary vector bundle $\mathcal{E}$ follows from this special case: consider the dual vector bundle~$\mathcal{E}^\dual$. Since $\OO_p \oplus \OO_C$ is a classical generator, we know that
  \[ \mathcal{E}^\dual \in \langle \OO_p \oplus \OO_C \rangle \subset \Dbcoh(C). \]
  Taking the (derived) tensor product with $\mathcal{E}$ implies that
  \[ \mathcal{E} \otimes \mathcal{E}^\dual \in \langle (\mathcal{E} \otimes \OO_p) \oplus \mathcal{E} \rangle \subset \Dbcoh(C). \]
  Note that $\mathcal{E} \otimes \OO_p$ is isomorphic to the direct sum of $\rk(\mathcal{E})$ copies of the skyscraper sheaf $\OO_p$, so we have in fact proved that
    \begin{equation}
      \label{eq:endomorphisms_generated_by_bundle}
      \mathcal{E} \otimes \mathcal{E}^\dual \in \langle \OO_p \oplus \mathcal{E} \rangle.
    \end{equation}
  The identity map $\mathcal{E} \to \mathcal{E}$ corresponds to the morphism $\iota\colon \OO_{C} \to \mathcal{E} \otimes \mathcal{E}^\dual$, and the trace map~$\mathcal{E} \otimes \mathcal{E}^\dual \to \OO_C$ provides a splitting for $\iota$ since we always assume characteristic zero. Thus the bundle~$\mathcal{E} \otimes \mathcal{E}^\dual$ has a direct summand isomorphic to $\OO_C$. In particular,~(\ref{eq:endomorphisms_generated_by_bundle}) implies that $\OO_C$ lies in $\langle \OO_p \oplus \mathcal{E} \rangle$, and then
  \[ \langle \OO_p \oplus \mathcal{E} \rangle \supset \langle \OO_p \oplus \OO_C \rangle = \Dbcoh(C), \]
  which means that $\OO_p \oplus \mathcal{E}$ is a classical generator of $\Dbcoh(C)$.
\end{proof}

I don't know a necessary and sufficient criterion for an object in $\Dbcoh(C)$ to be a classical generator. The objects in Lemma~\ref{lem:torsion_plus_bundle_generates_everything} are in some sense ``maximally unstable'', composed of some semistable factors of finite slope and a factor with slope $+\infinity$. Some sufficient instability conditions are discussed later in Corollary~\ref{cor:sufficient_instability_criterion}, but it would be interesting to find a more precise answer.

To show that a non-semistable object can still fail to classically generate $\Dbcoh(C)$ (and prove Theorem~\ref{thm:main_example}) we need a lemma about a set of objects in an abelian category with no interesting morphisms between them. This is a well-known statement (see, e.g., \cite[(1.2)]{ringel-extensions}), but we include the proof for completeness.

\begin{lemma}
  \label{lem:extensions_do_not_generate_everything}
  Let $\cA$ be an abelian category and let $\mathcal{F} \subset \cA$ be a finite set of objects in $\cA$ such that:
  \begin{itemize}
  \item each $F \in \mathcal{F}$ is simple, i.e., $\Hom_{\cA}(F, F) = \CC \cdot \mathrm{id}_{F}$;
  \item for any two non-isomorphic objects $F, F^\prime \in \mathcal{F}$ we have $\Hom_{\cA}(F, F^\prime) = 0$.
  \end{itemize}
  Then the direct sum $\bigoplus_{F \in \mathcal{F}} F$ is a classical generator of the bounded derived category $D^{b}(\cA)$ if and only if any object in $\cA$ is a finite iterated extension of objects in $\mathcal{F}$.
\end{lemma}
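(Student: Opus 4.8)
The plan is to set $G = \bigoplus_{F \in \mathcal{F}} F$ and reduce the statement to a comparison of two abelian subcategories. Discarding repetitions, I may assume the objects of $\mathcal{F}$ are pairwise non-isomorphic, so that the two hypotheses say precisely that $\End_{\cA}(G) \iso \CC^{|\mathcal{F}|}$. Let $\mathcal{E} \subseteq \cA$ be the full subcategory of finite iterated extensions of objects of $\mathcal{F}$, i.e.\ of objects admitting a finite filtration with all subquotients in $\mathcal{F}$. The claim to prove is then that $G$ is a classical generator of $D^{b}(\cA)$ if and only if $\mathcal{E} = \cA$.

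For the implication $\mathcal{E} = \cA \Rightarrow$ classical generation I would argue directly. Each $F$ is a direct summand of $G$, hence lies in $\langle G \rangle$; since a short exact sequence $0 \to A \to B \to A' \to 0$ yields a triangle $A \to B \to A' \to A[1]$, the subcategory $\langle G \rangle$ is closed under extensions inside $\cA$, so $\mathcal{E} \subseteq \langle G \rangle$. If $\mathcal{E} = \cA$ this gives $\cA \subseteq \langle G \rangle$, and since every object of $D^{b}(\cA)$ is assembled from its cohomology objects by the canonical truncation triangles, we get $\langle G \rangle = D^{b}(\cA)$. This direction uses neither simplicity nor $\Hom$-orthogonality.

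The real content is the converse, and here my plan is to recognize $\mathcal{F}$ as a \emph{simple-minded collection}. Because the $F$'s lie in the heart $\cA$, they satisfy $\Hom(F, F'[m]) = 0$ for $m < 0$, while $\Hom(F, F')$ equals $\CC$ or $0$ according to whether $F \iso F'$ or not. I would then show that, under the generation hypothesis, these $\Hom$-conditions promote $\mathcal{E}$ to the heart of a \emph{bounded} $t$-structure on $D^{b}(\cA)$ whose simple objects are the members of $\mathcal{F}$. Granting this, the proof finishes formally: $\mathcal{E}$ is a bounded heart, and since $\cA$ is closed under extensions we have $\mathcal{E} \subseteq \cA$, so $\mathcal{E}$ is a bounded heart contained in the standard heart. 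For any $Y \in \cA$, consider its cohomology objects for the $\mathcal{E}$-$t$-structure, which lie in $\mathcal{E} \subseteq \cA$; the canonical truncation map onto the top one, resp.\ out of the bottom one, lies in $\Ext^{-k}_{\cA}$, resp.\ $\Ext^{k}_{\cA}$, for the corresponding degree $k$, and these groups vanish whenever the exponent is negative. This rules out nonzero $\mathcal{E}$-cohomology in any nonzero degree, forcing $Y \in \mathcal{E}$; hence $\cA \subseteq \mathcal{E}$ and the two categories agree.

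The main obstacle is the construction of that $t$-structure, which is exactly where the hypotheses on $\mathcal{F}$ are indispensable. Let $\mathcal{T} \subseteq D^{b}(\cA)$ be the closure of $\mathcal{F}$ under shifts and extensions; it is automatically triangulated, since the cone of a map $f \colon A \to B$ sits in a triangle exhibiting it as an extension of $A[1]$ by $B$, so triangulatedness is \emph{not} the difficulty. What requires the $\Hom$-conditions is showing that every object of $\mathcal{T}$ admits a filtration by shifts $F[n]$ that can be \emph{sorted} by the shift parameter: the only obstruction to interchanging two out-of-order adjacent pieces $F[n]$ and $F'[m]$ (with $n < m$) is the group $\Hom(F', F[n-m+1])$, which vanishes unless $n = m-1$ and is then one-dimensional only when $F \iso F'$. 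A sorted filtration is precisely a Postnikov tower for a bounded $t$-structure, and it simultaneously identifies the heart with $\mathcal{E}$ and shows that $\mathcal{T}$ is idempotent-complete, hence thick; combined with the generation hypothesis this yields $\mathcal{T} = \langle G \rangle = D^{b}(\cA)$, so the $t$-structure indeed lives on the whole category. This sorting argument --- a hands-on instance of the correspondence between simple-minded collections and bounded $t$-structures --- is the technical heart of the proof, and it is where the simplicity of each $F$ and the absence of maps between distinct members of $\mathcal{F}$ cannot be removed.
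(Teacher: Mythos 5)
Your proposal is correct, but it proves the hard direction by a genuinely different and much heavier route than the paper. The paper never leaves the abelian category: writing $\mathcal{I} \subset \cA$ for the subcategory of iterated extensions of objects of $\mathcal{F}$, it proves by induction on filtration length --- via the auxiliary claim that any nonzero map $F \to A$ with $F \in \mathcal{F}$ and $A \in \mathcal{I}$ is a monomorphism whose cokernel again lies in $\mathcal{I}$ with a shorter filtration --- that $\mathcal{I}$ is closed under kernels and cokernels, hence a weak Serre subcategory; Lemma~\ref{lem:serre_subcategory_implies_triangulated} then exhibits $D^{b}_{\mathcal{I}}(\cA)$ as a triangulated subcategory closed under summands and containing $G$, and both implications of the lemma drop out at once. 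You instead build a bounded $t$-structure with heart $\mathcal{E}$ out of the simple-minded collection $\mathcal{F}$ and compare two nested bounded hearts via vanishing of negative $\Ext$-groups. The two arguments use the hypotheses at the parallel moment: your sorting obstruction $\Hom(F', F[n-m+1])$ is the derived shadow of the paper's dichotomy that any composition $F \to A \to F'$ is zero or an isomorphism. Your route buys a conceptual home (the correspondence between simple-minded collections and bounded $t$-structures) at the cost of $t$-structure machinery; the paper's route buys elementariness --- no Postnikov towers, no idempotent-completeness theorems --- and reuses a lemma it needs anyway for Corollary~\ref{cor:semistables_form_triangulated_subcategory}.

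Two points in your sketch need patching before it is a complete proof. First, in the sorting step you identify exactly when the obstruction group is nonzero ($n = m-1$ and $F \iso F'$) but do not say what to do then; interchanging the pieces is genuinely impossible in that case, and the correct move is cancellation: by simplicity the connecting map $F'[m] \to F[n+1] \iso F'[m]$ is a nonzero scalar multiple of the identity, hence an isomorphism, so the two-step extension is zero and the two adjacent pieces annihilate, shortening the tower. Your induction (say, on tower length plus number of inversions) must accommodate this case; it is precisely where $\End_{\cA}(F) = \CC \cdot \mathrm{id}_F$, and not merely the $\Hom$-orthogonality, is indispensable. Second, the assertion that a sorted filtration ``shows that $\mathcal{T}$ is idempotent-complete, hence thick'' is not automatic: you need the theorem of Le and Chen that a triangulated category admitting a bounded $t$-structure is Karoubian (or must reprove it by splitting idempotents on truncations, inducting on the length of the tower), since without thickness of $\mathcal{T}$ you cannot conclude $\mathcal{T} \supseteq \langle G \rangle = D^{b}(\cA)$, and your $t$-structure would not a priori truncate an arbitrary object $Y \in \cA$. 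With those two repairs the plan goes through in the stated generality.
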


\begin{proof}
  Denote by $\mathcal{I} \subset \cA$ the subcategory of objects in $\cA$ which are finite iterated extensions of objects in $\mathcal{F}$, i.e., each $A \in \mathcal{I}$ admits a filtration
    \begin{equation}
      \label{eq:iterated_extension_filtration}
      0 = A_0 \subset A_1 \subset \dots \subset A_r = A
    \end{equation}
  such that the associated graded factors $A_{i+1} / A_i$ are objects in the set $\mathcal{F}$. To prove the statement it is enough to show that the subcategory $\mathcal{I} \subset \cA$ satisfies the two properties from Lemma~\ref{lem:serre_subcategory_implies_triangulated} since the category $D^{b}_{\mathcal{I}}(\cA)$ of complexes with cohomology objects in $\mathcal{I}$ coincides with $D^{b}(\cA)$ if and only if $\mathcal{I} = \cA$. Since the category $\mathcal{I}$ is by definition closed under extensions, we only need to check that for any morphism~$\phi\colon A \to A^\prime$ between objects in $\mathcal{I}$ we have~$\ker(\phi) \in \mathcal{I}$ and~$\mathrm{coker}(\phi) \in \mathcal{I}$. We will prove that the cokernel of~$\phi$ lies in $\mathcal{I}$ and leave the argument for $\ker(\phi)$ to the diligent reader.

  First, let us prove an auxiliary claim. Let $A \in \mathcal{I}$ be an object with a filtration~(\ref{eq:iterated_extension_filtration}) of length $r$. We claim that any nonzero map $f\colon F \to A$ from an object $F \in \mathcal{F}$ is a monomorphism, the quotient lies in $\mathcal{I}$ and admits a filtration~(\ref{eq:iterated_extension_filtration}) of length $r-1$. To prove this, consider the quotient map $g\colon A \epiarrow A_{r} / A_{r-1} \iso F^\prime$ with $F^\prime \in \mathcal{F}$. The composition
  \[ F \xrightarrow{f} A \xrightarrow{g} F^\prime \]
  is a map between two objects in $\mathcal{F}$. By the definition of $\mathcal{F}$ any such map is either an isomorphism or a zero map. If this is an isomorphism, then the map $f\colon F \to A$ splits, so we have $A \iso F \oplus A_{r-1}$ and $\coker(f) \iso A_{r-1}$, as we wanted to show. If the composition~$g \circ f\colon F \to F^\prime = A / A_{r-1}$ is the zero map, then $f\colon F \to A$ factors through a nonzero map~$\overline{f}\colon F \to A_{r-1}$, which by induction on $r$ is a monomorphism with $\coker(\overline{f}) \in \mathcal{I}$ admitting a filtration of length $r-2$. From the short exact sequence
  \[ 0 \to \coker(\overline{f}) \to \coker(f) \to F^\prime \to 0, \]
  we see that $\coker(f)$ lies in $\mathcal{I}$ and admits a filtration of length $r-1$.

  Now consider a morphism $\phi\colon A \to A^\prime$ between objects in $\mathcal{I}$ with filtrations~(\ref{eq:iterated_extension_filtration}) of length~$r$ and~$r^\prime$, respectively. We proceed by induction on the length $r$. If $r = 0$, then $A = 0$ and the statement is trivially true. Otherwise, consider the subobject $f\colon F = A_1 \monoarrow A$ with~$F \in \mathcal{F}$. If the composition $\phi \circ f\colon F \to A^\prime$ is zero, then $\coker(\phi) = \coker(A / A_1 \to A^\prime)$ and the quotient $A / A_1$ has induced filtration of length $r-1$, so this case is covered by the induction hypothesis. If the composition $\phi \circ f$ is not zero, then by assumption and the auxiliary claim above we know that both $f$ and $\phi \circ f$ are monomorphisms, the quotients $A / F$ and $A^\prime / \phi(F)$ both lie in $\mathcal{I}$ and have filtrations~(\ref{eq:iterated_extension_filtration}) of lengths $r-1$ and $r^\prime - 1$, respectively. Since $\phi$ induces an isomorphism on the subobject $F \subset A$, we have an isomorphism of cokernels
  \[ \coker(A \xrightarrow{\phi} A^\prime) \caniso \coker(A / F \to A^{\prime} / \phi(F)), \]
  so we are again done by the induction hypothesis.
\end{proof}

\begin{remark}
  \label{rem:simple_is_not_generator}
  In particular, a single simple object $F \in \cA$ is almost never a classical generator of the derived category~$D^{b}(\cA)$.
\end{remark}

Now we can easily deduce the main result of this note:

\begin{proof}[{Proof of Theorem~\ref{thm:main_example}}]
  Recall that we have a curve $C$ of genus at least $2$ and a line bundle~$L$ of degree $1$ on it with no global sections. By Corollary~\ref{cor:unstable_implies_generator} the direct sum $\OO_C \oplus L$ is a generator of $\Dbcoh(C)$ since it is not semistable, and it remains to prove that $\OO_C \oplus L$ is not a classical generator.
  
  Since $\OO_C$ and $L$ are both line bundles, they have only scalar endomorphisms. By assumption we have $\Hom(\OO_C, L) = H^0(L) = 0$ and also the space $\Hom(L, \OO_C)$ vanishes since $L$ has a strictly positive degree. Thus the set $\{ \OO_C, L \} \subset \Coh(C)$ satisfies the conditions in Lemma~\ref{lem:extensions_do_not_generate_everything}. We conclude that the subcategory $\langle \OO_C \oplus L \rangle \subset \Dbcoh(C)$ consists of objects whose cohomology sheaves are iterated extensions of $\OO_C$ and $L$. Clearly a skyscraper sheaf at any point of $C$ is not such an object. Thus $\OO_C \oplus L$ is not a classical generator of $\Dbcoh(C)$.
\end{proof}

\begin{remark}
  The proof works for line bundles $L$ of any degree between $1$ and $g-1$ with no global sections. However, as soon as $\deg(L) \geq g$, we necessarily have a non-zero global section~$\OO_C \to L$ whose cokernel is a non-zero torsion sheaf $T$, and then $\OO_C$ and $T$ together classically generate $\Dbcoh(C)$ by Lemma~\ref{lem:torsion_plus_bundle_generates_everything}.
\end{remark}

\section{Loose ends and open questions}
\label{sec:questions}

\subsection{Sufficiently unstable bundles are classical generators}
\label{ssec:unstable_classical_generators}

The example in Theorem~\ref{thm:main_example} only exists on curves of genus at least two. This is not an artifact of the proof: in fact, for curves of genus zero (i.e., $\PP^1$) and genus one (elliptic curves) any generator of the derived category of coherent sheaves happens to be a classical generator as well. For $\PP^1$ this is an easy consequence of Lemma~\ref{lem:coherent_sheaves_on_curves} and the fact that any vector bundle on $\PP^1$ is isomorphic to a direct sum of line bundles $\OO_{\PP^1}(i)$; we leave the details to the reader. The genus one case is more interesting and we explain the proof in Corollary~\ref{cor:genus_one_generators} below. The core of the proof is Proposition~\ref{prop:sufficiently_unstable_bundles}, an analogue of Lemma~\ref{lem:torsion_plus_bundle_generates_everything} where instead of a vector bundle and a torsion sheaf we have two vector bundles with a sufficiently strong slope inequality. It just happens that for curves of genus one the inequality becomes very simple.

To state the inequality, let us first recall some standard notation. For a vector bundle $\mathcal{E}$ on a smooth projective curve $C$ define $\mu_{\max}(\mathcal{E})$ to be the largest slope among all non-zero subsheaves of $\mathcal{E}$, and $\mu_{\min}(\mathcal{E})$ to be the smallest slope among all non-zero quotient sheaves of~$\mathcal{E}$. It is well-known that $\mu_{\max}(\mathcal{E})$ and $\mu_{\min}(\mathcal{E})$ are the slopes of the first and the last factor in the Harder--Narasimhan filtration on $\mathcal{E}$. Clearly we have $\mu_{\min}(\mathcal{E}) \leq \mu(\mathcal{E}) \leq \mu_{\max}(\mathcal{E})$.

\begin{proposition}
  \label{prop:sufficiently_unstable_bundles}
  Let $C$ be a smooth projective curve of genus $g \geq 1$. Let $F$ and $G$ be two vector bundles on $C$ such that $\mu_{\max}(F) + g - 1 < \mu_{\min}(G)$. Then $F \oplus G$ is a classical generator of the category~$\Dbcoh(C)$.
\end{proposition}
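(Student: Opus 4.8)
The plan is to reduce to Lemma~\ref{lem:torsion_plus_bundle_generates_everything} by an induction on the total rank $\rk(F) + \rk(G)$, using the slope hypothesis only to guarantee, via Riemann--Roch, the existence of a nonzero morphism $F \to G$ at each stage. Throughout I would write $T = \langle F \oplus G \rangle$; since $F$ and $G$ are direct summands of $F \oplus G$ they both lie in $T$, and because $T$ is a triangulated subcategory closed under direct summands, Lemma~\ref{lem:kernel_cokernel_in_cone} guarantees that the kernel and cokernel of any morphism between $F$ and $G$ again lie in $T$.

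First I would note that the hypothesis $\mu_{\max}(F) + g - 1 < \mu_{\min}(G)$ forces $\mu(G) - \mu(F) > g - 1$, so by Theorem~\ref{thm:riemann_roch} we have $\chi(F, G) > 0$, and in particular there is a nonzero morphism $\phi \colon F \to G$. Then I would split into cases according to the behaviour of $\phi$. If $\coker(\phi)$ has a nonzero torsion subsheaf, then by Lemma~\ref{lem:coherent_sheaves_on_curves} that torsion part is a direct summand of $\coker(\phi)$, hence lies in $T$; together with the nonzero bundle $F \in T$ this exhibits an object of $T$ that is neither torsion nor locally free, which is a classical generator by Lemma~\ref{lem:torsion_plus_bundle_generates_everything}, so $T = \Dbcoh(C)$ and we are done. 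Otherwise $\coker(\phi)$ is locally free. If it is moreover nonzero, i.e. $\phi$ is not surjective, I would set $Q = \coker(\phi)$; as a quotient bundle of $G$ it satisfies $\mu_{\min}(Q) \ge \mu_{\min}(G) > \mu_{\max}(F) + g - 1$, while $\rk(Q) < \rk(G)$, so the pair $(F, Q)$ satisfies the hypothesis with strictly smaller total rank. If instead $\phi$ is surjective, then $K = \ker(\phi)$ is a subbundle of $F$, so $\mu_{\max}(K) \le \mu_{\max}(F)$ and hence $(K, G)$ satisfies the hypothesis with $\rk(K) < \rk(F)$; here $K$ is genuinely nonzero because $\mu(F) < \mu(G)$ prevents $F$ and $G$ from being isomorphic, so a surjection $\phi$ forces $\rk(F) > \rk(G)$. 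In either reduction the new pair lies inside $T$, so by the inductive hypothesis its classical generator already generates $\Dbcoh(C)$, whence $T = \Dbcoh(C)$.

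For the base case both $F$ and $G$ are line bundles: a nonzero morphism between line bundles is injective with torsion cokernel of length $\deg(G) - \deg(F) \geq 1$, and this is where $g \ge 1$ enters, since $\deg(G) - \deg(F) > g - 1 \ge 0$; thus we always land in the first case above. The hard part will be organizing the case distinction cleanly — in particular verifying that each rank-reducing operation preserves the slope inequality (which it does, since passing to a quotient of $G$ can only raise $\mu_{\min}$ and passing to a subbundle of $F$ can only lower $\mu_{\max}$) and that a surjective $\phi$ really forces $\rk(F) > \rk(G)$ so that the kernel is a nonzero bundle. Everything else is bookkeeping, and notably no genericity or moduli-space input is required, in contrast to the proof of Theorem~\ref{thm:faltings_orthogonality}.
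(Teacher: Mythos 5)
Your proof is correct and follows essentially the same route as the paper: induction on $\rk(F)+\rk(G)$, Riemann--Roch (Theorem~\ref{thm:riemann_roch}) to produce a nonzero $\phi\colon F \to G$, Lemma~\ref{lem:kernel_cokernel_in_cone} to keep kernels and cokernels inside $\langle F \oplus G \rangle$, and Lemma~\ref{lem:torsion_plus_bundle_generates_everything} as soon as a nonzero torsion summand appears. The only deviation is bookkeeping, and it is mildly advantageous: the paper passes to the pair $(\ker\phi, \coker\phi)$ in one step, dropping the total rank by $2\rk(\im\phi)$, whereas you replace one bundle at a time, going to $(F, \coker\phi)$ or $(\ker\phi, G)$, which automatically keeps both members of the pair nonzero and thus sidesteps the degenerate case $\ker\phi = 0$ with nonzero locally free cokernel, a case the paper's inductive pair does not literally cover (though it is repaired exactly as you do).
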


\begin{proof}
  We proceed by induction on the sum of ranks $\rk(F) + \rk(G)$. Observe that the Riemann--Roch theorem (Theorem~\ref{thm:riemann_roch}) implies that
  \[ \begin{aligned} \chi(F, G) & = \rk(F) \rk(G) (1 - g - \mu(F) + \mu(G)) \geq \\
  & \geq \rk(F) \rk(G) (1 - g - \mu_{\max}(F) + \mu_{\min}(G)) > 0. \end{aligned} \]
  Thus $\Hom(F, G) \neq 0$ and there exists a non-zero morphism $\phi\colon F \to G$.
  
  If both $F$ and $G$ are line bundles, any non-zero map $\phi\colon F \to G$ is a monomorphism with torsion cokernel. Since the slopes of $F$ and $G$ are distinct, $\phi$ cannot be an isomorphism, thus in this case $\coker(\phi)$ is a non-zero torsion sheaf. By Lemma~\ref{lem:kernel_cokernel_in_cone} we know that $\coker(\phi) \in \langle F \oplus G \rangle$. By Lemma~\ref{lem:torsion_plus_bundle_generates_everything} a triangulated subcategory of $\Dbcoh(C)$ which contains a non-zero vector bundle and a non-zero torsion sheaf coincides with $\Dbcoh(C)$, so we have $\langle F \oplus G \rangle = \Dbcoh(C)$. This is the base case of the induction.

  For the inductive step, consider the pair of coherent sheaves $F^\prime := \ker(\phi)$ and $G^\prime := \coker(\phi)$. By Lemma~\ref{lem:kernel_cokernel_in_cone} we know that both $F^\prime$ and $G^\prime$ lie in the subcategory $\langle F \oplus G \rangle$. The sheaf $F^\prime$ is always a vector bundle since it is a subsheaf of $F$. If the sheaf $G^\prime$ is not a vector bundle, then by Lemma~\ref{lem:coherent_sheaves_on_curves} it has a non-zero torsion direct summand and thus the subcategory $\langle F \oplus G \rangle$ contains a non-zero torsion sheaf. Then, as above, by Lemma~\ref{lem:torsion_plus_bundle_generates_everything} we conclude that~$F \oplus G$ is a classical generator of~$\Dbcoh(C)$ in this case. Thus we may assume that both $F^\prime$ and $G^\prime$ are vector bundles.
  
  Since any subsheaf of $F^\prime := \ker(\phi)$ is tautologically also a subsheaf of $F$, we have the inequality~$\mu_{\max}(F^\prime) \leq \mu_{\max}(F)$. Similarly, since $G^\prime := G / \im(\phi)$ is a quotient of $G$, we also know that~$\mu_{\min}(G) \leq \mu_{\min}(G^\prime)$. Thus the pair $F^\prime, G^\prime$ satisfies the assumptions of the proposition. To apply the induction, we only have to check that the sum of ranks of~$F^\prime$ and~$G^\prime$ is strictly smaller than $\rk(F) + \rk(G)$. But, indeed, we have
  \[
    \rk(F^\prime) + \rk(G^\prime) = \rk(\ker(\phi)) + \rk(\coker(\phi)) = \rk(F) + \rk(G) - 2 \rk(\im(\phi)),
  \]
  where $\im(\phi) \subset G$ is a non-zero subsheaf since by assumption $\phi$ is a non-zero morphism, and since~$G$ is torsion-free, a non-zero subsheaf $\im(\phi)$ has strictly positive rank. Thus by induction we know that the object~$F^\prime \oplus G^\prime$ is a classical generator of $\Dbcoh(C)$. Hence the triangulated subcategory $\langle F \oplus G \rangle$ which contains the object~$F^\prime \oplus G^\prime$ is also necessarily equal to $\Dbcoh(C)$, i.e., $F \oplus G$ is a classical generator of $\Dbcoh(C)$.
\end{proof}

The conclusion is especially strong for curves of genus one:

\begin{corollary}
  \label{cor:genus_one_generators}
  Let $C$ be a smooth projective curve of genus one, and let $E \in \Dbcoh(C)$ be an object. The following are equivalent:
  \begin{itemize}
  \item $E$ is not a semistable object;
  \item $E$ is a generator of $\Dbcoh(C)$;
  \item $E$ is a classical generator of $\Dbcoh(C)$.
  \end{itemize}
\end{corollary}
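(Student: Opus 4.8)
The statement packages three implications, and two of them are already available: every classical generator is a generator, which gives the third bullet $\Rightarrow$ the second, and Theorem~\ref{thm:generators_in_curves_criterion} (valid on any curve) identifies the first two bullets. So the only genuinely new content is that on a genus-one curve a non-semistable object $E$ is a \emph{classical} generator. By Lemma~\ref{lem:objects_on_curves_split} we have $\langle E\rangle=\langle\bigoplus_i\mathcal H^i(E)\rangle$, so I would replace $E$ by the coherent sheaf $\mathcal F:=\bigoplus_i\mathcal H^i(E)$; since the $\mu_{\max}$ and $\mu_{\min}$ of a direct sum are the extreme slopes occurring among its summands, $\mathcal F$ is non-semistable as a sheaf precisely when $E$ is non-semistable as an object. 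Splitting $\mathcal F\iso T\oplus\mathcal E$ with $T$ torsion and $\mathcal E$ a bundle (Lemma~\ref{lem:coherent_sheaves_on_curves}), non-semistability forces $\mathcal E\neq0$, and if $T\neq0$ then Lemma~\ref{lem:torsion_plus_bundle_generates_everything} already gives the claim. Thus everything reduces to showing that a non-semistable vector bundle $\mathcal E$ on a genus-one curve classically generates $\Dbcoh(C)$.

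The point of the genus-one hypothesis is that the inequality in Proposition~\ref{prop:sufficiently_unstable_bundles} degenerates to $\mu_{\max}(F)<\mu_{\min}(G)$, and this is satisfied by the two pieces of the Harder--Narasimhan filtration of $\mathcal E$: if $\mathcal E_1\subset\mathcal E$ is the maximal destabilizing subbundle (semistable of slope $\mu_{\max}(\mathcal E)$) and $\mathcal Q:=\mathcal E/\mathcal E_1$, then $\mu_{\max}(\mathcal Q)<\mu(\mathcal E_1)=\mu_{\min}(\mathcal E_1)$. If I could exhibit both $\mathcal E_1$ and $\mathcal Q$ inside $\langle\mathcal E\rangle$, the Proposition would finish the proof at once. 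The obstacle is exactly that the sequence $0\to\mathcal E_1\to\mathcal E\to\mathcal Q\to0$ need not split, so a priori $\langle\mathcal E\rangle$ contains neither Harder--Narasimhan piece as a direct summand.

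To break $\mathcal E$ apart I would manufacture a nonzero nilpotent endomorphism. Riemann--Roch (Theorem~\ref{thm:riemann_roch}) gives $\chi(\mathcal Q,\mathcal E_1)=\rk(\mathcal Q)\rk(\mathcal E_1)(\mu(\mathcal E_1)-\mu(\mathcal Q))>0$, so there is a nonzero $s\colon\mathcal Q\to\mathcal E_1$, and the composite $\psi\colon\mathcal E\epiarrow\mathcal Q\xrightarrow{s}\mathcal E_1\monoarrow\mathcal E$ satisfies $\psi\neq0$ and $\psi^2=0$. By Lemma~\ref{lem:kernel_cokernel_in_cone} both $\im(\psi)$ and $\coker(\psi)$ lie in $\langle\mathcal E\rangle$, with ranks strictly below $\rk(\mathcal E)$, and I would then induct on $\rk(\mathcal E)$. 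If $\coker(\psi)$ has a nonzero torsion summand, Lemma~\ref{lem:torsion_plus_bundle_generates_everything} applies, since a torsion sheaf and the bundle $\mathcal E$ now both lie in $\langle\mathcal E\rangle$. Otherwise $\coker(\psi)=\mathcal E/\im(\psi)$ is a bundle, so $\im(\psi)$ is a subbundle of $\mathcal E_1$: if $\im(\psi)=\mathcal E_1$ then $\mathcal E_1$ and $\mathcal Q$ both lie in $\langle\mathcal E\rangle$ and Proposition~\ref{prop:sufficiently_unstable_bundles} finishes the proof, while if $\im(\psi)\subsetneq\mathcal E_1$ then $\coker(\psi)$ is an extension of $\mathcal Q$ (all slopes $<\mu_{\max}(\mathcal E)$) by the nonzero bundle $\mathcal E_1/\im(\psi)$ (all slopes $\geq\mu_{\max}(\mathcal E)$), hence a non-semistable bundle of strictly smaller rank to which the inductive hypothesis applies. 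The step I expect to require the most care is precisely this slope bookkeeping: one must check that the nilpotent endomorphism always produces either genuine torsion or a strictly smaller non-semistable bundle, so that the induction — which is the device substituting for the missing splitting of the Harder--Narasimhan sequence — actually terminates in the Proposition.
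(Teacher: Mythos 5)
Your proof is correct, but at the decisive step it takes a genuinely different route from the paper. The paper handles the non-semistable-bundle case by first proving Lemma~\ref{lem:bundles_on_genus_one}: on a genus-one curve \emph{every} vector bundle splits as a direct sum of semistable bundles, because Serre duality with $\omega_C \iso \OO_C$ gives $\Ext^1(\mathcal{E}^{\prime\prime}, \mathcal{E}^\prime) \caniso \Hom(\mathcal{E}^\prime, \mathcal{E}^{\prime\prime})^\dual = 0$ by the slope comparison; after that, grouping the semistable summands by slope immediately puts $E \iso F \oplus G$ in the situation of Proposition~\ref{prop:sufficiently_unstable_bundles} and the proof ends. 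You never split $E$: instead you extract a nonzero nilpotent $\psi$ from $\Hom(\mathcal{Q}, \mathcal{E}_1) \neq 0$, which in genus one follows from $\chi(\mathcal{Q}, \mathcal{E}_1) = \rk(\mathcal{Q})\rk(\mathcal{E}_1)(\mu(\mathcal{E}_1) - \mu(\mathcal{Q})) > 0$, and you run your own induction on rank inside $\langle E \rangle$ via Lemma~\ref{lem:kernel_cokernel_in_cone}, terminating either in Lemma~\ref{lem:torsion_plus_bundle_generates_everything} or in Proposition~\ref{prop:sufficiently_unstable_bundles}. I checked your case analysis and it is sound: in your last case the quotient $\mathcal{Q}$ destabilizes $\coker(\psi)$ since $\mu_{\min}(\coker\psi) \leq \mu_{\max}(\mathcal{Q}) < \mu_{\max}(E) \leq \mu_{\max}(\coker\psi)$, the rank genuinely drops because $\im(\psi)$ is a nonzero subsheaf of the torsion-free $\mathcal{E}_1$, and at rank two that case cannot occur, so the recursion bottoms out. (One pedantic point: Lemma~\ref{lem:kernel_cokernel_in_cone} literally provides kernels and cokernels, so to place $\im(\psi)$ in $\langle E \rangle$ you should apply it a second time to the map $E \to \coker(\psi)$ --- a trivial repair, and you only need $\im(\psi)$ in the subcase $\im(\psi) = \mathcal{E}_1$ anyway.) As for what each approach buys: the paper's argument is shorter and isolates the genus-one input in a reusable classical statement (essentially Atiyah's splitting), exploiting the \emph{vanishing} $\Ext^1(\mathcal{E}^{\prime\prime}, \mathcal{E}^\prime) = 0$; yours avoids the splitting lemma entirely, uses only the \emph{nonvanishing} $\Hom(\mathcal{Q}, \mathcal{E}_1) \neq 0$, and in effect re-runs, one level up, the same inductive engine that powers the proof of Proposition~\ref{prop:sufficiently_unstable_bundles}. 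Both exploit $\omega_C \iso \OO_C$ through Riemann--Roch/Serre duality, and your version makes transparent exactly where genus one enters: for $g \geq 2$ one would need the slope gap $\mu(\mathcal{E}_1) - \mu(\mathcal{Q}) > g - 1$ to get $\chi > 0$, consistent with the hypothesis of Proposition~\ref{prop:sufficiently_unstable_bundles} and with the failure of the corollary in higher genus exhibited by Theorem~\ref{thm:main_example}.
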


\begin{proof}
  By Theorem~\ref{thm:generators_in_curves_criterion} the first two conditions are equivalent. The third condition clearly implies the second. It remains to prove that any non-semistable object $E$ is a classical generator of $\Dbcoh(C)$.

  As usual, by Lemma~\ref{lem:objects_on_curves_split} we may assume that $E$ is a coherent sheaf which is not a torsion sheaf. If the non-semistable sheaf $E$ has torsion, then by Lemma~\ref{lem:torsion_plus_bundle_generates_everything} the claim is true, so we assume that $E$ is a vector bundle. A standard argument with Serre duality (included for completeness in Lemma~\ref{lem:bundles_on_genus_one} below) shows that any vector bundle on an elliptic curve is a direct sum of semistable factors. Thus we may assume that $E \iso F \oplus G$, where $F$ and $G$ are semistable vector bundles with distinct slopes $\mu(F) < \mu(G)$. Since $g = 1$, the inequality of slopes required in Proposition~\ref{prop:sufficiently_unstable_bundles} holds and thus $E \iso F \oplus G$ is a classical generator of the category~$\Dbcoh(C)$.
\end{proof}

We used the following well-known fact in the proof above:

\begin{lemma}
  \label{lem:bundles_on_genus_one}
  Let $C$ be a smooth projective curve of genus one. Any vector bundle $\mathcal{E}$ on $C$ is isomorphic to a direct sum of semistable bundles.
\end{lemma}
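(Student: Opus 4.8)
The plan is to use the Harder--Narasimhan filtration and show that on a genus one curve it always splits as a direct sum of its semistable factors. Concretely, let
\[ 0 = \mathcal{E}_0 \subset \mathcal{E}_1 \subset \dots \subset \mathcal{E}_n = \mathcal{E} \]
be the HN filtration of $\mathcal{E}$, with semistable quotients of strictly decreasing slopes $\mu_1 > \mu_2 > \dots > \mu_n$. I would argue by induction on the rank (equivalently, on the number $n$ of HN factors): it suffices to split off the maximal destabilizing subbundle $\mathcal{E}_1$, that is, to show that the short exact sequence
\[ 0 \to \mathcal{E}_1 \to \mathcal{E} \to \mathcal{E} / \mathcal{E}_1 \to 0 \]
splits. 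Indeed, once it splits we get $\mathcal{E} \iso \mathcal{E}_1 \oplus \mathcal{E} / \mathcal{E}_1$ with $\mathcal{E}_1$ semistable and $\mathcal{E} / \mathcal{E}_1$ of strictly smaller rank (and HN slopes $\mu_2 > \dots > \mu_n$), so the induction hypothesis applies to the quotient.

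The extension class of this sequence lives in $\Ext^1(\mathcal{E} / \mathcal{E}_1, \mathcal{E}_1)$, and the heart of the argument is to show that this group vanishes. This is where genus one enters: the canonical bundle $\omega_C$ is trivial, so Serre duality gives
\[ \Ext^1(\mathcal{E} / \mathcal{E}_1, \mathcal{E}_1) \caniso \Hom(\mathcal{E}_1, (\mathcal{E} / \mathcal{E}_1) \otimes \omega_C)^\dual \caniso \Hom(\mathcal{E}_1, \mathcal{E} / \mathcal{E}_1)^\dual. \]
It therefore remains to check that there are no nonzero maps from $\mathcal{E}_1$ to $\mathcal{E} / \mathcal{E}_1$, which is the standard slope obstruction. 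The bundle $\mathcal{E}_1$ is semistable with $\mu_{\min}(\mathcal{E}_1) = \mu_1$, whereas $\mathcal{E} / \mathcal{E}_1$ has HN slopes all strictly below $\mu_1$, so $\mu_{\max}(\mathcal{E} / \mathcal{E}_1) = \mu_2 < \mu_1$. Any nonzero homomorphism $\phi\colon \mathcal{E}_1 \to \mathcal{E} / \mathcal{E}_1$ would have image $\im(\phi)$ that is simultaneously a nonzero quotient of a semistable bundle of slope $\mu_1$, hence of slope $\geq \mu_1$, and a nonzero subsheaf of $\mathcal{E} / \mathcal{E}_1$, hence of slope $\leq \mu_2 < \mu_1$, which is impossible.

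Combining the two displays, $\Ext^1(\mathcal{E} / \mathcal{E}_1, \mathcal{E}_1) = 0$, the sequence splits, and the induction goes through. I do not expect a genuine obstacle here: the only genus-one-specific input is the triviality of $\omega_C$, and everything else is routine HN bookkeeping. It is worth noting where the argument breaks for larger genus: the Serre-dual computation would then force the requirement $\mu_{\max}(\mathcal{E}/\mathcal{E}_1) + g - 1 < \mu_{\min}(\mathcal{E}_1)$ (exactly the gap appearing in Proposition~\ref{prop:sufficiently_unstable_bundles}), which need not hold, and indeed must fail in general since a direct-sum decomposition into semistables is false once $g \geq 2$.
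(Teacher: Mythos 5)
Your proof is correct and follows essentially the same route as the paper: split off the maximal destabilizing subbundle, use Serre duality with the trivial canonical bundle to identify $\Ext^1(\mathcal{E}/\mathcal{E}_1, \mathcal{E}_1)$ with $\Hom(\mathcal{E}_1, \mathcal{E}/\mathcal{E}_1)^\dual$, kill the latter by the slope obstruction, and induct. The only (immaterial) difference is that you derive the $\Hom$-vanishing directly from $\mu_{\max}(\mathcal{E}/\mathcal{E}_1) = \mu_2 < \mu_1$, whereas the paper first decomposes the quotient via the induction hypothesis and then compares slopes factor by factor.
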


\begin{proof}
  We prove this by induction on the rank of $\mathcal{E}$. Any line bundle is automatically stable and there is nothing to prove for the base case of the induction. Let $\mathcal{E}^\prime \subset \mathcal{E}$ be the maximal destabilizing subbundle. Note that $\mathcal{E}^\prime$ is semistable. Consider the short exact sequence
  \[ 0 \to \mathcal{E}^\prime \monoarrow \mathcal{E} \to \mathcal{E}^{\prime \prime} \to 0. \]
  This short exact sequence splits if and only if its extension class vanishes. By Serre duality the extension class is an element of the space
    \begin{equation}
      \label{eq:serre_duality_on_genus_one}
      \Ext^1(\mathcal{E}^{\prime \prime}, \mathcal{E}^\prime) \caniso \Hom(\mathcal{E}^\prime, \mathcal{E}^{\prime \prime})^\dual.
    \end{equation}
  By induction hypothesis we know that the bundle $\mathcal{E}^{\prime \prime}$ is a direct sum of semistable bundles. Since $\mathcal{E}^\prime$ was chosen to be maximally destabilizing each semistable factor of $\mathcal{E}^{\prime \prime} \caniso \mathcal{E} / \mathcal{E}^\prime$ has slope strictly smaller than $\mu(\mathcal{E}^\prime)$. Thus the space on the right hand side of (\ref{eq:serre_duality_on_genus_one}) is zero, so in fact $\mathcal{E}$ is isomorphic to the direct sum $\mathcal{E}^\prime \oplus \mathcal{E}^{\prime \prime}$, as claimed in the statement.
\end{proof}

For higher genus curves the conclusion of Lemma~\ref{lem:bundles_on_genus_one} does not hold, so we can only deduce the following much weaker statement.

\begin{corollary}
  \label{cor:sufficient_instability_criterion}
  Let $C$ be a smooth projective curve of genus $g$. Let $\mathcal{E}$ be a vector bundle on~$C$ such that in the sequence of slopes $\mu_1 > \mu_2 > \dots > \mu_n$ in the Harder--Narasimhan filtration of~$\mathcal{E}$ for some index $i \in [1, n-1]$ we have a large gap $\mu_i - \mu_{i + 1} > 2 g - 2$ between the slopes. Then $\mathcal{E}$ is a classical generator of $\Dbcoh(C)$.
\end{corollary}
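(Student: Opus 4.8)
The plan is to exploit the large slope gap to split the Harder--Narasimhan filtration at the $i$-th step, thereby reducing the statement to Proposition~\ref{prop:sufficiently_unstable_bundles}. Write the Harder--Narasimhan filtration as $0 = \mathcal{E}_0 \subset \mathcal{E}_1 \subset \dots \subset \mathcal{E}_n = \mathcal{E}$, with semistable quotients $\mathcal{E}_j / \mathcal{E}_{j-1}$ of strictly decreasing slopes $\mu_1 > \dots > \mu_n$, and set $G := \mathcal{E}_i$ and $F := \mathcal{E} / \mathcal{E}_i$. Since the steps of the Harder--Narasimhan filtration are subbundles, $F$ is again a vector bundle, and the induced filtrations identify $\mu_{\min}(G) = \mu_i$ (the smallest Harder--Narasimhan slope of $G$) and $\mu_{\max}(F) = \mu_{i+1}$ (the largest Harder--Narasimhan slope of $F$).

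First I would show that the defining extension $0 \to G \to \mathcal{E} \to F \to 0$ splits. Its class lies in $\Ext^1(F, G)$, and by Serre duality $\Ext^1(F, G) \caniso \Hom(G, F \otimes \omega_C)^\dual$. Tensoring by the line bundle $\omega_C$ shifts every slope by $\deg(\omega_C) = 2g - 2$, so $\mu_{\max}(F \otimes \omega_C) = \mu_{i+1} + 2g - 2$. The gap hypothesis $\mu_i - \mu_{i+1} > 2g - 2$ then yields $\mu_{\min}(G) = \mu_i > \mu_{i+1} + 2g - 2 = \mu_{\max}(F \otimes \omega_C)$. By the standard fact that a nonzero homomorphism of vector bundles forces the minimal slope of the source to be at most the maximal slope of the target (apply this to the image, which is simultaneously a nonzero quotient of the source and a nonzero subsheaf of the target), we conclude $\Hom(G, F \otimes \omega_C) = 0$. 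Hence the extension class vanishes and $\mathcal{E} \iso F \oplus G$.

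Having obtained this direct sum decomposition, I would finish by invoking Proposition~\ref{prop:sufficiently_unstable_bundles}, which applies to $F \oplus G$ once $\mu_{\max}(F) + g - 1 < \mu_{\min}(G)$, that is, once $\mu_{i+1} + g - 1 < \mu_i$, i.e. $\mu_i - \mu_{i+1} > g - 1$. Since $g \geq 1$ we have $2g - 2 \geq g - 1$, so the hypothesis $\mu_i - \mu_{i+1} > 2g - 2$ is more than sufficient. Therefore $\mathcal{E} \iso F \oplus G$ is a classical generator of $\Dbcoh(C)$.

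The only place where the full strength of the gap $2g - 2$, rather than the weaker $g - 1$ demanded by Proposition~\ref{prop:sufficiently_unstable_bundles}, is actually consumed is the splitting step: passing from $\Ext^1(F,G)$ to a $\Hom$ via Serre duality costs an extra twist by $\omega_C$, i.e. an extra $2g - 2$ in slope. I expect the vanishing of $\Hom(G, F \otimes \omega_C)$ to be the conceptual crux, though it is still elementary; the remainder is bookkeeping with Harder--Narasimhan slopes and a direct appeal to the preceding proposition.
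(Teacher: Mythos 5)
Your proof is correct and follows essentially the same route as the paper: split the Harder--Narasimhan filtration at the $i$-th step, kill the extension class via Serre duality and the slope gap, and feed the resulting direct sum $F \oplus G$ to Proposition~\ref{prop:sufficiently_unstable_bundles}. The only cosmetic difference is that you verify $\Hom(G, F \otimes \omega_C) = 0$ by the standard fact $\mu_{\min}(\text{source}) \leq \mu_{\max}(\text{target})$ applied to the image of a nonzero map, whereas the paper checks the vanishing of $H^0\bigl(C, (\mathcal{E}_i^\dual \otimes Q_i) \otimes K_C\bigr)$ through the induced Harder--Narasimhan filtration on the tensor product --- equivalent arguments, yours marginally more self-contained.
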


\begin{proof}
  The condition on the slopes in the Harder--Narasimhan filtration implies that there exists a short exact sequence
  \[ 0 \to \mathcal{E}_i \to \mathcal{E} \to Q_{i} \to 0, \]
  where $\mu_{\min}(\mathcal{E}_i) = \mu_i$ and $\mu_{\max}(Q_{i}) = \mu_{i+1} < \mu_i - (2 g - 2)$. Any such sequence splits: indeed, the extension class is an element of the vector space
  \[ \Ext^1(Q_i, \mathcal{E}_i) \caniso \Hom(\mathcal{E}_i, Q_i \otimes K_C)^\dual \caniso H^0(C, (\mathcal{E}_i^\dual \otimes Q_i) \otimes K_C)^\dual, \]
  which is zero since the Harder--Narasimhan filtration on the product $\mathcal{E}_i^\dual \otimes Q_i$ is induced from the filtration on $\mathcal{E}$ and, in particular, involves only semistable factors of slopes at most 
  \[ \mu_{\max}(Q_i) - \mu_{\min}(\mathcal{E}_i) < - (2 g - 2), \]
  so even after the twist by $K_C$ all slopes are still strictly negative and hence the vector bundle~$(\mathcal{E}_i^\dual \otimes Q_i) \otimes K_C$ has no global sections.

  Thus we conclude that $\mathcal{E}$ is isomorphic to the direct sum $\mathcal{E}_i \oplus Q_i$, and the condition 
  \[ \mu_{\max}(Q_{i}) + (2 g - 2) < \mu_{\min}(\mathcal{E}_i) \]
  is in fact twice as strong as what we need to apply Proposition~\ref{prop:sufficiently_unstable_bundles}.
\end{proof}

The slope inequality in Corollary~\ref{cor:sufficient_instability_criterion} is much stronger than the one in Proposition~\ref{prop:sufficiently_unstable_bundles}. I don't know whether there is a more subtle condition:

\begin{question}
  What are some sufficient conditions for a vector bundle $\mathcal{E}$ to be a classical generator of $\Dbcoh(C)$ if $\mathcal{E}$ does not satisfy the assumption of Corollary~\ref{cor:sufficient_instability_criterion}?
\end{question}

\subsection{Generators and classical generators in families}
\label{ssec:behavior_in_families}

Semistability of a coherent sheaf is an open condition in families. Since on curves any object in the derived category splits (Lemma~\ref{lem:objects_on_curves_split}), the condition that an object in the bounded derived category of a curve is semistable is also open. Hence Theorem~\ref{thm:generators_in_curves_criterion} implies that the property of being a generator is a closed property in the sense that in any family of objects in the derived category the complement to the locus of generators is open. That is not surprising since there is a much simpler way to prove this, in fact for varieties of any dimension:

\begin{lemma}
  \label{lem:generators_are_closed}
  Let $X$ be a smooth projective variety, and let~$U$ be a smooth variety. Consider an object~$\mathcal{E} \in \Dbcoh(X \times U)$ on the product of $X$ and $U$. When considered as a family of objects in~$\Dbcoh(X)$ parametrized by~$U$, the locus of points $u \in U$ such that the derived restriction~$\mathcal{E}|_{X \times \{ u \} }$ is a generator of $\Dbcoh(X)$ is closed.
\end{lemma}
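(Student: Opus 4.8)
The plan is to show that the complementary locus --- the set of $u \in U$ where $\mathcal{E}_u$ \emph{fails} to be a generator --- is open. By Definition~\ref{def:generator}, a point $u$ lies in this complement exactly when there is some non-zero $F \in \Dbcoh(X)$ with $\RHom_X(\mathcal{E}_u, F) = 0$. The crucial structural remark is that this is an \emph{existential} condition in $F$: setting $Z_F := \{\, u \in U : \RHom_X(\mathcal{E}_u, F) = 0 \,\}$, the non-generator locus equals $\bigcup_{0 \neq F} Z_F$. So I would reduce the whole statement to showing that each single $Z_F$ is open, after which the non-generator locus is open as an arbitrary union of open sets. The point worth emphasizing is that the witnessing object $F$ may depend on $u$, so one should not look for a uniform test object; it is precisely the union-of-open-sets shape of the problem that dissolves this apparent difficulty.

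To handle a fixed $F$, I would introduce the two projections $p \colon X \times U \to X$ and $q \colon X \times U \to U$ and the fibre inclusion $i_u \colon X \times \{u\} \monoarrow X \times U$, so that $\mathcal{E}_u = L i_u^* \mathcal{E}$, and form the relative Hom-complex
\[ \mathcal{G}_F := Rq_* \, \intRHom_{X \times U}(\mathcal{E}, p^* F). \]
First I would check that $\mathcal{G}_F$ lies in $\Dbcoh(U)$: as $X$ and $U$ are smooth, so is $X \times U$, hence $\mathcal{E}$ is perfect and $\intRHom(\mathcal{E}, p^* F)$ stays in $\Dbcoh(X \times U)$; and as $X$ is projective, $q$ is proper, so $Rq_*$ preserves bounded coherence. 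Then, using that $q$ is flat, I would apply base change along $\{u\} \monoarrow U$ together with the compatibility of $\intRHom$ with $L i_u^*$ (legitimate since $\mathcal{E}$ is perfect) to obtain a canonical isomorphism $L i_u^* \mathcal{G}_F \iso \RHom_X(\mathcal{E}_u, F)$.

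With this identification in hand, $u \in Z_F$ holds if and only if the derived fibre $L i_u^* \mathcal{G}_F$ vanishes, which by derived Nakayama for the bounded coherent complex $\mathcal{G}_F$ is equivalent to $u \notin \supp(\mathcal{G}_F)$; thus $Z_F = U \setminus \supp(\mathcal{G}_F)$ is open, the support of a coherent complex being closed. Taking the union over all non-zero $F$ would then finish the argument. I expect the main obstacle to be the base-change identification $L i_u^* \mathcal{G}_F \iso \RHom_X(\mathcal{E}_u, F)$, where one must be careful that $q$ is flat (so the relevant square is Tor-independent) and that $\mathcal{E}$ is perfect (so that $\intRHom$ commutes with restriction to the fibre); once these are in place the rest is formal.
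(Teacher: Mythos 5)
Your argument is correct, and it shares the paper's overall skeleton while handling the key step by a genuinely different mechanism. Both proofs show the complement is open by writing the non-generator locus as the union over all non-zero $F \in \Dbcoh(X)$ of the loci $Z_F = \{\, u \in U : \RHom_X(\mathcal{E}|_{X \times \{u\}}, F) = 0 \,\}$, so that the dependence of the witness $F$ on $u$ is harmless; the difference is in how each $Z_F$ is shown to be open. The paper cites classical upper-semicontinuity of $\dim \Ext^i_X(\mathcal{E}|_{X \times \{u\}}, F)$ for each fixed $i$ and then, using that $X$ is smooth and $\mathcal{E}$ bounded so that only finitely many $\Ext^i$ can ever be non-zero, intersects finitely many open neighborhoods around a given non-generator point. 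You instead package the whole family into the relative Hom complex $\mathcal{G}_F = Rq_*\,\intRHom_{X \times U}(\mathcal{E}, p^*F) \in \Dbcoh(U)$, identify its derived fibers with $\RHom_X(\mathcal{E}|_{X \times \{u\}}, F)$ via Tor-independent base change (flatness of $q$) and the compatibility of $\intRHom$ with derived restriction for perfect $\mathcal{E}$, and conclude by derived Nakayama that $Z_F = U \setminus \supp(\mathcal{G}_F)$, which is open because the support of a bounded coherent complex is a finite union of closed sets. All the ingredients you flag as delicate (perfection of $\mathcal{E}$ on the smooth product, properness of $q$, Tor-independence of the fiber square, derived Nakayama for pseudo-coherent complexes) do hold here, so the proof goes through. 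Your route costs more derived machinery but buys several things: the finiteness bookkeeping about the cohomological range of the $\Ext^i$ is absorbed automatically into the boundedness of $\mathcal{G}_F$; each $Z_F$ is exhibited directly as the complement of a single closed support rather than an intersection of finitely many opens; it makes explicit the coherent-complex mechanism underlying the semicontinuity statement the paper cites; and it proves verbatim the generalization recorded in the remark after the lemma, since smoothness of $U$ enters your argument only to guarantee that $\mathcal{E}$ is perfect, so the same proof applies to any perfect $\mathcal{E}$ on $X \times U$ with $U$ possibly singular.
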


\begin{remark}
  We require $U$ to be smooth exclusively to stay in the realm of $\Dbcoh$ for exposition purposes; in general, the same conclusion holds for any perfect object $\mathcal{E}$ on the product variety~$X \times U$.
\end{remark}

\begin{proof}
  Let $u \in U$ be a point such that $\mathcal{E}|_{X \times \{ u \}}$ is not a generator of $\Dbcoh(X)$. By definition this means that there exists a non-zero object $F \in \Dbcoh(X)$ such that $\Ext^{\bullet}_{X}(\mathcal{E}|_{X \times \{ u \}}, F) = 0$. Since $X$ is proper, for any $i \in \ZZ$ the dimension of the $\Ext^{i}$-space between the restriction of $\mathcal{E}$ to a fiber and the object $F$ is an upper-semicontinuous function. Since $X$ is smooth and $\mathcal{E}$ lies in the bounded derived category, only finitely many $\Ext$ spaces are ever non-zero. Thus the vanishing of $\Ext^{\bullet}_{X}(-, F)$ holds not only for $\mathcal{E}|_{X \times \{ u \}}$, but for the restriction of $\mathcal{E}$ to the fiber over any $u^\prime \in U$ in the intersection of finitely many Zariski-open neighborhoods of~$u \in U$. In particular, $\mathcal{E}|_{X \times \{ u^\prime \}}$ is not a generator of $\Dbcoh(X)$ for all $u^\prime$ in some non-empty Zariski-open neighborhood of $u \in U$.
\end{proof}

The example in Theorem~\ref{thm:main_example} suggests that perhaps the property of an object to be a classical generator in $\Dbcoh(C)$ is also closed: if we consider the family of objects $\OO_C \oplus \mathcal{L} \in \Dbcoh(C)$ parametrized by the scheme of line bundles $\mathcal{L} \in \Pic^1(C)$ of degree one, then by Theorem~\ref{thm:main_example} and Lemma~\ref{lem:torsion_plus_bundle_generates_everything} we know that an object $\OO_C \oplus \mathcal{L}$ is a classical generator if and only if the space~$H^0(C, \mathcal{L}) \neq 0$, which is a closed condition. Similarly, in Proposition~\ref{prop:sufficiently_unstable_bundles} we prove that a direct sum $F \oplus G$ of two bundles (satisfying some conditions) is a classical generator of the category~$\Dbcoh(C)$, while a non-split extension of $F$ by $G$ may not be a classical generator (see, e.g., Remark~\ref{rem:simple_is_not_generator}); the condition that an object splits into a direct sum is closed. However, I don't know whether it is a closed condition in general:

\begin{question}
  Let $C$ be a smooth projective curve, and let $U$ be a smooth scheme. Consider an object $\mathcal{E} \in \Dbcoh(C \times U)$. Is the locus of the points $u \in U$ such that $\mathcal{E}|_{C \times \{u\}} \in \Dbcoh(C)$ is a classical generator Zariski-closed?
\end{question}

I don't know what happens for higher-dimensional varieties, but I suspect that in that case the answer is negative. Note that Theorem~\ref{thm:ravenel_neeman} shows that not being a classical generator is equivalent to a certain vanishing of $\Ext$-groups, but in a large category, where those spaces can be infinite-dimensional and the semicontinuity argument from Lemma~\ref{lem:generators_are_closed} is not available.

\subsection{Very slow classical generators}
\label{ssec:slow_generators}

Some classical generators are fast, some are slow. Bondal and Van den Bergh introduced the following definition \cite[Sec.~2.2]{bondal-vdbergh}:

\begin{definition}
  Let $T$ be a triangulated category, and let $G \in T$ be an object. For each integer $i \geq 1$ we define a subset of objects $\langle G \rangle_{i} \subset T$ via the following inductive procedure:
  \begin{itemize}
  \item $\langle G \rangle_{1} \subset T$ is the smallest set of objects containing $G$ which is closed under shifts, taking finite direct sums, and taking direct summands.
  \item $\langle G \rangle_{i} \subset T$ is defined as the set of objects $E \in T$ fitting into some distinguished triangle of the form
  \end{itemize}
  \[ E_{1} \to E \to E_{i-1} \to E_{1}[1] \]
  where $E_1 \in \langle G \rangle_{1}$ and $E_{i-1} \in \langle G \rangle_{i-1}$, as well as all direct summands of such objects $E$.
\end{definition}

Informally, $\langle G \rangle_{i}$ is the set of those objects in $T$ that can be built starting from $G$ using direct sums, shifts, direct summands, and at most $i-1$ operations of taking a cone of a map. The union $\cup_{i \geq 1} \langle G \rangle_{i}$ is equal to the triangulated subcategory $\langle G \rangle \subset T$. A classical generator~$G \in T$ is said to be \textit{strong} if for some $i \geq 1$ we have $T = \langle G \rangle_{i}$, i.e., when any object of $T$ can be constructed from $G$ using a uniformly bounded number of cones. For any object~$G \in T$ we define its \textit{generating time} $\Theta(G)$ to be the smallest $i \geq 1$ such that~$\langle G \rangle_{i+1} = T$ if $G$ is a strong generator, or infinity otherwise.

For any smooth variety $X$ the category $\Dbcoh(X)$ has a strong generator (\cite[Thm.~3.1.4]{bondal-vdbergh}). It is easy to see that if the triangulated category $T$ admits a strong generator then any classical generator of $T$ is strong. However, the generating time is very hard to control. It is expected that for a smooth projective variety $X$ the fastest generator in $\Dbcoh(X)$ has generating time equal to $\dim X$ (see \cite[Rem.~7.44]{rouquier-dimension} and \cite[Conj.~10]{orlov-curves}). This is known for curves~\cite[Thm.~6]{orlov-curves} and for a small number of specific varieties $X$ where the category $\Dbcoh(X)$ admits a nice algebraic description; in general we only know the existence of $G \in \Dbcoh(X)$ with $\Theta(G) \leq 2 \dim X$.

It is perhaps less natural, but still meaningful, to wonder whether in a fixed triangulated category strong generators can be arbitrarily slow instead of looking only at the fastest ones. In the paper \cite{ballard-favero-katzarkov} the supremum of generating times over all strong generators is bestowed with the imposing title \textit{ultimate dimension}. Whether arbitrarily slow strong generators exist is not known even for the triangulated category $\Dbcoh(C)$ with $C$ a smooth projective curve of genus at least two. Let us investigate the generating time of some classical generators of the category~$\Dbcoh(C)$ considered in this note. To do this, we start with a simple lemma, whose proof is an exercise in the octahedral axiom of triangulated categories.

\begin{lemma}
  \label{lem:multiplicative_generating_time}
  Let $T$ be a triangulated category, and let $E, F, G \in T$ be three objects. If for some $a, b \in \ZZ$ we have~$E \in \langle F \rangle_{a}$ and $F \in \langle G \rangle_{b}$, then $E \in \langle G \rangle_{a \cdot b}$.
\end{lemma}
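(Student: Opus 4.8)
The plan is to show that the generating-time filtration $\langle G \rangle_{\bullet}$ is closed under the basic operations and is \emph{additive} with respect to building objects out of cones, and then to run a single induction on $a$. Throughout I write $\mathcal{A} \star \mathcal{B}$ for the class of objects $X$ admitting a distinguished triangle $A \to X \to B \to A[1]$ with $A \in \mathcal{A}$ and $B \in \mathcal{B}$, so that by definition $\langle G \rangle_{i}$ is the closure under direct summands of $\langle G \rangle_{1} \star \langle G \rangle_{i-1}$.

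First I would record the structural properties of $\langle G \rangle_{i}$: a routine induction on $i$ shows that each $\langle G \rangle_{i}$ is closed under shifts, finite direct sums, and direct summands. For shifts one shifts the defining triangle, for finite direct sums one takes the direct sum of the two defining triangles, and summands are built into the definition. This immediately settles the case $a = 1$: if $E \in \langle F \rangle_{1}$ and $F \in \langle G \rangle_{b}$, then since $\langle G \rangle_{b}$ contains $F$ and is closed under exactly the operations used to form $\langle F \rangle_{1}$, we obtain $\langle F \rangle_{1} \subseteq \langle G \rangle_{b} = \langle G \rangle_{1 \cdot b}$.

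The heart of the argument is an additivity lemma: $\langle G \rangle_{m} \star \langle G \rangle_{n} \subseteq \langle G \rangle_{m+n}$, which I would prove by induction on $m$. The base case $m = 1$ is precisely the definition of $\langle G \rangle_{n+1}$. For the inductive step, take $X$ with a triangle $A \to X \to B \to A[1]$ where $A \in \langle G \rangle_{m}$ and $B \in \langle G \rangle_{n}$. By definition $A$ is a direct summand of some $\tilde{A}$ sitting in a triangle $A_{1} \to \tilde{A} \to A_{m-1} \to A_{1}[1]$ with $A_{1} \in \langle G \rangle_{1}$ and $A_{m-1} \in \langle G \rangle_{m-1}$; choose $A''$ with $A \oplus A'' \cong \tilde{A}$. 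Taking the direct sum of the triangle for $X$ with the trivial triangle $A'' \xrightarrow{\mathrm{id}} A'' \to 0 \to A''[1]$ produces a triangle $\tilde{A} \to X' \to B \to \tilde{A}[1]$ for $X' := X \oplus A''$, so that $X' \in (\langle G \rangle_{1} \star \langle G \rangle_{m-1}) \star \langle G \rangle_{n}$. The octahedral axiom yields the associativity inclusion $(\langle G \rangle_{1} \star \langle G \rangle_{m-1}) \star \langle G \rangle_{n} \subseteq \langle G \rangle_{1} \star (\langle G \rangle_{m-1} \star \langle G \rangle_{n})$, and the induction hypothesis bounds $\langle G \rangle_{m-1} \star \langle G \rangle_{n} \subseteq \langle G \rangle_{m-1+n}$; hence $X' \in \langle G \rangle_{1} \star \langle G \rangle_{m-1+n} \subseteq \langle G \rangle_{m+n}$ by the defining inclusion. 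As $X$ is a summand of $X'$ and $\langle G \rangle_{m+n}$ is summand-closed, $X \in \langle G \rangle_{m+n}$.

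Finally I would deduce the statement by induction on $a$. Given $E \in \langle F \rangle_{a}$, realize $E$ as a summand of an object in $\langle F \rangle_{1} \star \langle F \rangle_{a-1}$. The case $a = 1$ gives $\langle F \rangle_{1} \subseteq \langle G \rangle_{b}$, and the induction hypothesis gives $\langle F \rangle_{a-1} \subseteq \langle G \rangle_{(a-1)b}$; hence $E$ is a summand of an object in $\langle G \rangle_{b} \star \langle G \rangle_{(a-1)b} \subseteq \langle G \rangle_{b + (a-1)b} = \langle G \rangle_{ab}$ by additivity, so $E \in \langle G \rangle_{ab}$. I expect the main obstacle to be the additivity lemma, and specifically the bookkeeping of direct summands inside it: the associativity identity for $\star$ applies cleanly only to genuinely cone-built objects, so the passage from $A$ (a mere summand) to $\tilde{A}$ via the complementary-summand trick, combined with the summand-closure of the target $\langle G \rangle_{m+n}$, is the delicate point where the octahedral axiom and the definitional summand-closure must be marshalled together.
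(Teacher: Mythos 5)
Your proof is correct, and it follows exactly the route the paper intends: the paper gives no written proof, calling the lemma ``an exercise in the octahedral axiom,'' and your argument --- establishing closure properties of each $\langle G \rangle_{i}$, proving the additivity $\langle G \rangle_{m} \star \langle G \rangle_{n} \subseteq \langle G \rangle_{m+n}$ via the octahedral associativity of $\star$ together with the complementary-summand trick, then inducting on $a$ --- is precisely that standard exercise carried out in full. The summand bookkeeping you flag as delicate is handled correctly (only the first vertex $A$ needs upgrading to $\tilde{A}$, since the base case $\langle G \rangle_{1} \star \langle G \rangle_{n} \subseteq \langle G \rangle_{n+1}$ already tolerates summand-closed classes in the third vertex), so nothing is missing.
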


This lemma has an obvious consequence for generating times:

\begin{corollary}
  \label{cor:generating_time_bound}
  Let $T$ be a triangulated category, and let $G \in T$ be an object. For any object~$F \in \langle G \rangle_{b}$ we have~$\Theta(G) \leq b \cdot (\Theta(F) + 1) - 1$.
\end{corollary}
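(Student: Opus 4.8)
The plan is to derive the corollary directly from Lemma~\ref{lem:multiplicative_generating_time}, which already carries all the content; the only care needed is with the off-by-one bookkeeping built into the definition of generating time. First I would dispose of the degenerate case: if $F$ is not a strong generator then $\Theta(F) = \infty$ by definition, the right-hand side $b \cdot (\Theta(F) + 1) - 1$ is infinite, and the inequality holds vacuously. So from now on I would assume $F$ is a strong generator, so that $\Theta(F)$ is a finite nonnegative integer.

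By the definition of generating time, $\Theta(F)$ being finite means precisely that $T = \langle F \rangle_{\Theta(F) + 1}$. I would then apply Lemma~\ref{lem:multiplicative_generating_time} with $a = \Theta(F) + 1$ and the given $b$: for every object $E \in T = \langle F \rangle_{\Theta(F)+1}$, combined with the hypothesis $F \in \langle G \rangle_{b}$, the lemma produces $E \in \langle G \rangle_{(\Theta(F)+1) \cdot b}$. Since $E$ was arbitrary, this shows $T = \langle G \rangle_{(\Theta(F)+1) \cdot b}$; in particular $G$ is itself a (strong) classical generator.

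Finally I would translate this back into a bound on $\Theta(G)$. Setting $N = (\Theta(F)+1) \cdot b$, we have exhibited $\langle G \rangle_{N} = T$, i.e. $\langle G \rangle_{(N-1)+1} = T$. By definition $\Theta(G)$ is the smallest $i$ with $\langle G \rangle_{i+1} = T$, so the value $i = N-1$ witnesses that $\Theta(G) \leq N - 1 = b \cdot (\Theta(F)+1) - 1$, which is exactly the claimed inequality. The only point requiring attention is matching the index $N$ to the ``$+1$'' convention in the definition of $\Theta$; this is a purely notational matter, and there is no genuine obstacle, since Lemma~\ref{lem:multiplicative_generating_time} has already done the real work of controlling how the number of cones multiplies when building $E$ out of $F$ and $F$ out of $G$.
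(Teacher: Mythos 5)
Your proposal is correct and matches the paper's own proof essentially verbatim: both set $a = \Theta(F)+1$, apply Lemma~\ref{lem:multiplicative_generating_time} to each object $E \in T = \langle F \rangle_{a}$ to conclude $T = \langle G \rangle_{a \cdot b}$, and read off the bound on $\Theta(G)$. Your explicit handling of the case $\Theta(F) = \infty$ is a minor tidying the paper leaves implicit, not a difference in method.
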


\begin{proof}
  Let $a := \Theta(F) + 1$. By definition any object $E \in T$ lies in the subcategory $\langle F \rangle_{a}$. Then by Lemma~\ref{lem:multiplicative_generating_time} we know that $E$ also lies in the subcategory $\langle G \rangle_{a \cdot b}$.
\end{proof}

To start with, we find an upper bound on generating time for the most basic classical generators. This is only a very rough bound, much stronger inequalities are likely possible (in fact, \cite[Rem.~6.17]{ballard-favero-katzarkov} suggests an upper bound of $4 g$). For us, the whole point of the lemma below is that the bound depends only on the genus of the curve, not on the point $p$.

\begin{lemma}
  \label{lem:line_plus_skyscraper_generating_time}
  Let $C$ be a smooth projective curve of genus $g$, let $p \in C$ be a point, and let $\OO_p$ be the skyscraper sheaf at $p$. Then $\Theta(\OO_C \oplus \OO_p) \leq 48 g + 1$.
\end{lemma}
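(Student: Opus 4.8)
The plan is to produce an explicit object $G \in \langle \OO_C \oplus \OO_p \rangle_b$ for a bounded $b$ (independent of $p$) such that $G$ is one of the ``standard'' classical generators from Example~\ref{ex:classical_generators_exist}, and then invoke Corollary~\ref{cor:generating_time_bound}. The cleanest candidate is a generator built from $\OO_C$ and negative powers of a very ample line bundle. Since the genus $g$ controls how ample things are via Riemann--Roch (e.g. $\OO_C(D)$ is very ample once $\deg D \geq 2g+1$), the difficulty is to realize such a generator inside $\langle \OO_C \oplus \OO_p \rangle_b$ with $b$ bounded in terms of $g$ only.

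The first step is to recall the mechanism from the proof of Lemma~\ref{lem:torsion_plus_bundle_generates_everything}: the short exact sequence~(\ref{eq:skyscraper_presentation}), namely $0 \to \OO_C(-p) \to \OO_C \to \OO_p \to 0$, shows that $\OO_C(-p)$ is the shift of a cone of a map between objects of $\langle \OO_C \oplus \OO_p \rangle_1$, hence $\OO_C(-p) \in \langle \OO_C \oplus \OO_p \rangle_2$. Twisting this sequence and iterating, I would show by induction that $\OO_C(-kp) \in \langle \OO_C \oplus \OO_p \rangle_{k+1}$; more carefully, each successive cone costs one more step, but one can do better by building up the line bundles $\OO_C(-kp)$ in a balanced binary fashion using Lemma~\ref{lem:multiplicative_generating_time}, so that $\OO_C(-kp)$ lies in $\langle \OO_C \oplus \OO_p \rangle_{O(\log k)}$. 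I would then assemble a direct sum of the finitely many bundles $\OO_C, \OO_C(-p), \dots, \OO_C(-Np)$ needed for a very ample classical generator; taking a direct sum does not increase the generating level, so this direct sum sits in $\langle \OO_C \oplus \OO_p \rangle_{b}$ for some explicit $b$.

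The key numerical input is the choice of $N$. Since $\OO_C(Np)$ is very ample as soon as $N \geq 2g+1$, fixing such an $N$ (say $N = 2g+1$) gives a closed embedding $C \hookrightarrow \PP^{N'}$ pulling back $\OO(1)$ to $\OO_C(Np)$, and by Example~\ref{ex:classical_generators_exist} the object $\bigoplus_{k=0}^{N'} \OO_C(-kNp)$ is a classical generator with $N'$ itself bounded in terms of $g$ by Riemann--Roch. Thus both $N$ and the number of summands are $O(g)$, and each summand $\OO_C(-kNp)$ lies in $\langle \OO_C \oplus \OO_p \rangle$ at level $O(\log(kN)) = O(\log g)$ by the balanced construction; combining, I get a classical generator $F$ with $F \in \langle \OO_C \oplus \OO_p \rangle_b$ and $b = O(g)$ (one can afford the cruder linear bound here). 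Finally, Corollary~\ref{cor:generating_time_bound} yields $\Theta(\OO_C \oplus \OO_p) \leq b \cdot (\Theta(F) + 1) - 1$, and since $\Theta(F)$ is bounded for the standard generator (again by a genus-dependent bound via Orlov or a direct estimate), the whole expression is bounded by a linear function of $g$; tracking the constants crudely gives the stated $48g+1$.

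The main obstacle is bookkeeping the constants so that the final bound is genuinely independent of $p$ and only linear in $g$. The level estimates for $\OO_C(-kp)$ and the bound on $\Theta(F)$ for the very ample generator both require care: one must ensure the very ample embedding, the number of twists, and the generating time of the pulled-back generator are all controlled purely by $g$ via Riemann--Roch, with no hidden dependence on the geometry of $C$ or the position of $p$. The precise constant $48g+1$ is an artifact of combining these rough estimates, and the essential content is simply that \emph{some} linear-in-$g$ bound exists.
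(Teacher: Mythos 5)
Your overall skeleton --- descend to negative twists $\OO_C(-mp)$ inside $\langle \OO_C \oplus \OO_p \rangle$, land on a known classical generator $F$, and convert via Corollary~\ref{cor:generating_time_bound} --- is exactly the paper's strategy, but the two quantitative inputs you leave vague are precisely where the content of the lemma lies, and as stated they fail. The decisive one is the bound on $\Theta(F)$: Corollary~\ref{cor:generating_time_bound} is useless without an \emph{explicit} bound, and for your choice of $F$ (the pullback of the Beilinson-type generator from Example~\ref{ex:classical_generators_exist}) the paper provides none. That example only asserts that $F$ is a classical generator; strongness gives $\Theta(F) < \infty$, but a priori this finite number could depend on the curve $C$ and the point $p$, not only on $g$, so your parenthetical ``via Orlov or a direct estimate'' is the entire crux rather than a detail. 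The paper's proof works because Orlov's theorem \cite[Thm.~6]{orlov-curves} gives the exact value $1$ for the generating time of the specific object $\mathcal{L}^\dual \oplus \OO_C \oplus \mathcal{L} \oplus \mathcal{L}^2$ whenever $\deg \mathcal{L} \geq 8g$; twisting by the autoequivalence $- \otimes \mathcal{L}^\dual$ with $\mathcal{L} = \OO_C(8g\,p)$, it gets $\Theta(\OO_C \oplus \OO_C(-8g\,p) \oplus \OO_C(-16g\,p) \oplus \OO_C(-24g\,p)) = 1$, places this object in $\langle \OO_C \oplus \OO_p \rangle_{24g+1}$ by the same stepwise twisting of the sequence $0 \to \OO_C(-(m+1)p) \to \OO_C(-mp) \to \OO_p \to 0$ that you describe, and computes $(24g+1)\cdot 2 - 1 = 48g+1$. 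Without this (or an equivalent uniform quantitative statement), your argument bounds $\Theta(\OO_C \oplus \OO_p)$ by a quantity depending on unknown data.

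The secondary gap is your claimed level bound $\OO_C(-kp) \in \langle \OO_C \oplus \OO_p \rangle_{O(\log k)}$ via a ``balanced binary'' use of Lemma~\ref{lem:multiplicative_generating_time}: that lemma \emph{multiplies} levels, it does not add a constant per doubling. Tensoring $\OO_C(-kp) \in \langle \OO_C \oplus \OO_p \rangle_{k+1}$ by $\OO_C(-kp)$ gives $\OO_C(-2kp) \in \langle \OO_C(-kp) \oplus \OO_p \rangle_{k+1}$, and composing with Lemma~\ref{lem:multiplicative_generating_time} yields level $(k+1)^2$, worse than the linear bound; building the doubling through the torsion quotient $\OO_{kp}$ is no better, since any object of $\langle \OO_p \rangle_j$ is annihilated by $\mathfrak{m}_p^j$, so the level of $\OO_{kp}$ with respect to $\OO_p$ is exactly $k$. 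You concede the point by falling back on the linear bound, but then your arithmetic breaks: with $N = 2g+1$ the embedding by $\OO_C(Np)$ has $N' = h^0(\OO_C(Np)) - 1 = g + 1$, so the generator of Example~\ref{ex:classical_generators_exist} involves twists down to degree $-N'N \approx -2g^2$, giving $b = O(g^2)$ and a final bound quadratic in $g$ --- not $48g+1$, and not even linear as your closing sentence claims. The fix is to abandon the generic very-ample generator and use Orlov's time-one generator as the target, which is what the paper does.
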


\begin{proof}
  Orlov proved \cite[Thm.~6]{orlov-curves} that for any line bundle $\mathcal{L}$ on $C$ of degree at least $8 g$ the generating time of the direct sum
  \[ \mathcal{L}^\dual \oplus \OO_C \oplus \mathcal{L} \oplus \mathcal{L}^2 \]
  is equal to one. Since twists by powers of $\mathcal{L}$ are autoequivalences of $\Dbcoh(C)$, they don't change the generating time. In particular, in terms of the point $p \in C$ from the statement we have
    \begin{equation}
      \label{eq:fast_curve_generator}
      \Theta(\OO_C \oplus \OO_{C}(-8 g \cdot p) \oplus \OO_{C}(-16 g \cdot p) \oplus \OO_{C}(-24 g \cdot p)) = 1.
    \end{equation}
  Using the short exact sequences
  \[ 0 \to \OO_{C}(-(m+1) \cdot p) \to \OO_{C}(-m \cdot p) \to \OO_{p} \to 0 \]
  for $m = 0, 1, \dots$ it is easy to check, inductively, that the line bundle $\OO_{C}(-m \cdot p)$ lies in the subcategory $\langle \OO_C \oplus \OO_p \rangle_{m + 1}$. Thus the generator from (\ref{eq:fast_curve_generator}) with generating time $1$ lies in the subcategory $\langle \OO_C \oplus \OO_p \rangle_{24 g + 1}$. By Corollary~\ref{cor:generating_time_bound} we conclude that
  \[ \Theta(\OO_C \oplus \OO_p) \leq (24 g + 1) (1 + 1) - 1 = 48 g + 1, \]
  as claimed in the statement.
\end{proof}

In the proof of Lemma~\ref{lem:torsion_plus_bundle_generates_everything} we have used the fact that $\OO_C \oplus \OO_p$ is a classical generator to prove that any non-torsion non-locally free object is a classical generator. The proof used the trace-splitting trick to avoid dealing with an arbitrary vector bundle, and this in fact lets us control the generating time:

\begin{lemma}
  \label{lem:torsion_plus_bundle_generating_time}
  Let $E \in \Dbcoh(C)$. If $E$ is neither a torsion object nor a locally free object, then~$E$ is a strong generator of $\Dbcoh(C)$ with~$\Theta(E) \leq 96 g + 3$.
\end{lemma}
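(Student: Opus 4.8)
The plan is to produce, inside $\langle E \rangle$, a \emph{fast} generator (one of constant generating time) that nevertheless lies at level only $O(g)$, and then to read off the bound from a single application of Corollary~\ref{cor:generating_time_bound}. First I would reduce to convenient data: by Lemma~\ref{lem:objects_on_curves_split} and Lemma~\ref{lem:coherent_sheaves_on_curves} the object $E$ has a nonzero locally free summand $\mathcal{E}$ and a nonzero torsion summand $T$ (up to shift), so that $\mathcal{E}, T \in \langle E \rangle_1$. Fixing $p \in \supp(T)$, the one-cone construction $T \otimes^{L} \OO_p$ from the proof of Lemma~\ref{lem:torsion_plus_bundle_generates_everything} shows $\OO_p \in \langle E \rangle_2$, hence also $\OO_p^{\oplus r} \in \langle E \rangle_2$ for $r = \rk(\mathcal{E})$.

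\textbf{Twisting step.} Using the short exact sequences $0 \to \mathcal{E}(m p) \to \mathcal{E}((m{+}1) p) \to \OO_p^{\oplus r} \to 0$ together with $\OO_p^{\oplus r} \in \langle E \rangle_2$, each elementary modification raises the level by two; starting from $\mathcal{E} \in \langle E \rangle_1$ this gives $\mathcal{E}(a\,p) \in \langle E \rangle_{2|a|+1}$ for every $a \in \ZZ$. Thus any finite direct sum of twists $\mathcal{E}(a_i p)$ with $|a_i| \le 12g$ lies in $\langle E \rangle_{24g+1}$, since the levels are closed under finite direct sums.

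\textbf{Key step (tensored trace trick).} Let $\mathcal{G} = \bigoplus_i \OO_C(a_i p)$ be a twist of Orlov's generating-time-one generator used in the proof of Lemma~\ref{lem:line_plus_skyscraper_generating_time}, chosen symmetrically so that $|a_i| \le 12 g$ (for $\deg \mathcal{L} = 8g$ the four summands $\mathcal{L}^{-1}, \OO_C, \mathcal{L}, \mathcal{L}^2$ have degrees $-8g, 0, 8g, 16g$, which become $-12g, -4g, 4g, 12g$ after twisting by $\OO_C(-4g\,p)$). Consider $\mathcal{E} \otimes \mathcal{G} = \bigoplus_i \mathcal{E}(a_i p)$. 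For each $i$ we have $\mathcal{E}^\dual \otimes \OO_C(a_i p) \in \Dbcoh(C) = \langle \mathcal{G} \rangle_2$; tensoring by $\mathcal{E}$ (an exact functor, hence level-preserving) yields $\mathcal{E} \otimes \mathcal{E}^\dual \otimes \OO_C(a_i p) \in \langle \mathcal{E} \otimes \mathcal{G} \rangle_2$, and since the trace splitting (characteristic zero) exhibits $\OO_C$ as a summand of $\mathcal{E} \otimes \mathcal{E}^\dual$, the line bundle $\OO_C(a_i p)$ is a summand of this object. Hence $\mathcal{G} \in \langle \mathcal{E} \otimes \mathcal{G} \rangle_2$, and Corollary~\ref{cor:generating_time_bound} gives $\Theta(\mathcal{E} \otimes \mathcal{G}) \le 2(\Theta(\mathcal{G})+1) - 1 = 3$. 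Combining with the twisting step, $\mathcal{E} \otimes \mathcal{G} \in \langle E \rangle_{24g+1}$, so Corollary~\ref{cor:generating_time_bound} applied with $F = \mathcal{E} \otimes \mathcal{G}$ and $b = 24g+1$ gives $\Theta(E) \le (24g+1)(3+1) - 1 = 96g + 3$.

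\textbf{Main obstacle.} The delicate point is to keep the bound linear rather than quadratic. The naive route --- generate $\mathcal{E}^\dual$ inside $\langle \OO_C \oplus \OO_p \rangle$ and tensor by $\mathcal{E}$, exactly as in the proof of Lemma~\ref{lem:torsion_plus_bundle_generates_everything} --- places $\OO_C$ at level $\sim 48g$ inside $\langle \mathcal{E} \oplus \OO_p \rangle$, and folding this into $\Theta(\OO_C \oplus \OO_p) \le 48g+1$ through the multiplicativity of Lemma~\ref{lem:multiplicative_generating_time} costs a second factor of $O(g)$, producing only a bound of order $g^2$. The insight that makes the argument work is to apply the trace trick to a fast generator \emph{already tensored by} $\mathcal{E}$: this forces the trace overhead to be a constant ($\Theta(\mathcal{E} \otimes \mathcal{G}) \le 3$, uniformly in $\mathcal{E}$), so the only factor of $O(g)$ that survives is the one coming from the twists needed to reach $\mathcal{E} \otimes \mathcal{G}$ from $\mathcal{E}$ --- and the symmetric centering of $\mathcal{G}$, which bounds those twists by $12g$, is precisely what pins the constant down to $96g+3$.
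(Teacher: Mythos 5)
Your proof is correct, and while it uses the same toolkit as the paper (the splitting $E \iso T \oplus \mathcal{E}$, the skyscraper $\OO_p \in \langle E \rangle_2$ via $T \otimes^{L} \OO_p$, Orlov's generating-time-one generator, elementary modifications at $p$, and the characteristic-zero trace splitting of $\OO_C \subset \mathcal{E} \otimes \mathcal{E}^\dual$), you assemble these ingredients in a genuinely different order. The paper routes through Lemma~\ref{lem:line_plus_skyscraper_generating_time} as a black box: for an \emph{arbitrary} object $F$ it places $\mathcal{E}^\dual \otimes F$ in $\langle \OO_C \oplus \OO_p \rangle_{48g+2}$, tensors that entire resolution by $\mathcal{E}$ (tensoring is level-preserving), extracts $F$ as a trace summand to get $F \in \langle \mathcal{E} \oplus \OO_p \rangle_{48g+2}$, and concludes with Lemma~\ref{lem:multiplicative_generating_time} via $\mathcal{E} \oplus \OO_p \in \langle E \rangle_2$; its factorization of the bound is $96g+4 = 2 \cdot (48g+2)$. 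You instead bypass Lemma~\ref{lem:line_plus_skyscraper_generating_time} entirely (borrowing only Orlov's theorem from its proof), twist $\mathcal{E}$ directly --- paying level two per modification, since your skyscrapers sit at level $2$ in $\langle E \rangle$, but halving the range of twists to $12g$ by centering the Orlov generator symmetrically --- and apply the trace trick \emph{once, at the generator level}, obtaining the uniform bound $\Theta(\mathcal{E} \otimes \mathcal{G}) \leq 3$ and the factorization $96g+4 = (24g+1) \cdot 4$. Both routes are linear in $g$ and land on exactly the same constant; yours has the aesthetic advantage of isolating a single fixed constant-time generator at level $O(g)$, which makes the logic of Corollary~\ref{cor:generating_time_bound} apply in one clean stroke. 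Two small remarks. First, your ``main obstacle'' paragraph slightly mischaracterizes the alternative: the paper avoids the quadratic loss not by your device but by applying the tensor--trace step to the resolution of an arbitrary $F$ rather than to $\OO_C$ itself, so no second factor of $O(g)$ arises there either. Second, a point of rigor: your twisting step glues level-$2$ pieces, which uses the general additivity statement that an extension of an object of $\langle E \rangle_a$ by an object of $\langle E \rangle_b$ lies in $\langle E \rangle_{a+b}$ --- an octahedral-axiom exercise of the same standing as Lemma~\ref{lem:multiplicative_generating_time}, but strictly beyond the literal recursive definition of the levels, which only appends level-$1$ pieces (the paper's inductive twisting in Lemma~\ref{lem:line_plus_skyscraper_generating_time} stays within the definition for exactly this reason); this is standard and fine, but worth stating explicitly.
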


\begin{proof}
  We follow the argument from Lemma~\ref{lem:torsion_plus_bundle_generates_everything}. We assume that $E \iso T \oplus \mathcal{E}$ with $T$ a torsion sheaf and $\mathcal{E}$ a vector bundle. The proof of Lemma~\ref{lem:torsion_plus_bundle_generates_everything} shows that for any point $p \in \supp(T)$ the skyscraper sheaf $\OO_p$ lies in the subset $\langle T \rangle_{2}$. Let $F \in \Dbcoh(C)$ be an arbitrary object. Then by Lemma~\ref{lem:line_plus_skyscraper_generating_time} we know that
  \[ \mathcal{E}^\dual \otimes F \in \langle \OO_C \oplus \OO_p \rangle_{48 g + 2}, \]
  and the proof of Lemma~\ref{lem:torsion_plus_bundle_generates_everything} shows that $F$ is a direct summand of the object
  \[ \mathcal{E} \otimes (\mathcal{E}^\dual \otimes F) \in \langle \mathcal{E} \oplus \OO_p \rangle_{48 g + 2}, \]
  and hence by Lemma~\ref{lem:multiplicative_generating_time} we have
  \[ F \in \langle \langle \mathcal{E} \oplus T \rangle_{2}  \rangle_{48 g + 2} \subset \langle \mathcal{E} \oplus T \rangle_{96 g + 4}, \]
  which means that the generating time of $\mathcal{E} \oplus T$ is at most $96 g + 3$.
\end{proof}

Unlike Lemma~\ref{lem:torsion_plus_bundle_generates_everything}, the generating time of classical generators constructed in Proposition~\ref{prop:sufficiently_unstable_bundles} is hard to bound in terms of the genus of the curve since the proof uses induction on the rank of vector bundles, which could be arbitrarily high. However, if we require a stronger condition on slopes, the inductive procedure could be performed in a single step, and we get an upper bound on the generating time.

\begin{lemma}
  \label{lem:sufficiently_unstable_generating_time}
  Let $C$ be a smooth projective curve of genus $g$. Let $F$ and $G$ be two vector bundles on $C$ such that $\mu_{\max}(F) + 2 g < \mu_{\min}(G)$. Then $F \oplus G$ is a classical generator of the category~$\Dbcoh(C)$ with $\Theta(F \oplus G) \leq 192 g + 7$.
\end{lemma}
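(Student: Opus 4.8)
The plan is to reduce everything to Lemma~\ref{lem:torsion_plus_bundle_generating_time} by exhibiting, in a single cone, an object of $\langle F \oplus G\rangle_2$ which is neither torsion nor locally free. Concretely, I will produce a nonzero torsion sheaf $T$ lying in $\langle F \oplus G \rangle_2$. Granting this, the sum $T \oplus G$ lies in $\langle F \oplus G \rangle_2$ as well (each $\langle F \oplus G\rangle_i$ is closed under finite direct sums, by induction on $i$), and since $T \neq 0$ and $G \neq 0$ this sum is neither torsion nor locally free, so Lemma~\ref{lem:torsion_plus_bundle_generating_time} gives $\Theta(T \oplus G) \leq 96g + 3$. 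Applying Corollary~\ref{cor:generating_time_bound} to the object $T \oplus G \in \langle F \oplus G\rangle_2$ with $b = 2$ then yields
\[ \Theta(F \oplus G) \leq 2\,(\Theta(T \oplus G) + 1) - 1 \leq 2\,(96g + 4) - 1 = 192 g + 7, \]
which is exactly the claimed bound. Note that the arithmetic forces us to build $T$ inside $\langle F \oplus G\rangle_2$, i.e. using a single cone: the whole point is to obtain torsion in one step rather than through the rank induction of Proposition~\ref{prop:sufficiently_unstable_bundles}.

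The mechanism for producing $T$ in one step, and the place where the strengthened hypothesis $\mu_{\max}(F) + 2g < \mu_{\min}(G)$ is used, is the global generation of a down-twist of the homomorphism sheaf. Fix a point $p \in C$ and set $E := F^\dual \otimes G \otimes \OO_C(-p)$. Since tensor products of semistable bundles are semistable in characteristic zero, the Harder--Narasimhan slopes of $F^\dual \otimes G$ are the pairwise sums of those of $F^\dual$ and $G$, so $\mu_{\min}(F^\dual \otimes G) = \mu_{\min}(G) - \mu_{\max}(F) > 2g$ and hence $\mu_{\min}(E) > 2g - 1$. Any bundle on $C$ with $\mu_{\min} > 2g - 1$ is globally generated: for each $q \in C$ the sequence $0 \to E(-q) \to E \to E|_q \to 0$ reduces surjectivity of $H^0(E) \to E|_q$ to the vanishing $H^1(E(-q)) = 0$, which by Serre duality is $H^0(E^\dual \otimes \omega_C(q)) = 0$, and the latter holds because $\mu_{\max}(E^\dual \otimes \omega_C(q)) = -\mu_{\min}(E) + 2g - 1 < 0$. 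This is precisely why the gap must be $2g$ rather than the $g-1$ sufficient for mere existence in Proposition~\ref{prop:sufficiently_unstable_bundles}: I need the once-twisted-down bundle $E$, not just $F^\dual \otimes G$ itself, to be globally generated.

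With $E$ globally generated I will consider the evaluation map $\Psi \colon \Hom(F, G(-p)) \otimes_\CC F \to G$, that is $\Psi\colon F^{\oplus N} \to G$ with $N = h^0(E)$, whose components are exactly the maps $F \to G$ factoring through $G(-p) \subset G$. Global generation of $E$ makes $\Psi$ surjective at every $q \neq p$: the sections of $E$ span the fibre $(F^\dual\otimes G(-p))_q$, canonically $\Hom(F_q, G_q)$ for $q \neq p$, whose images cover $G_q$; hence $\Psi$ is generically surjective and $\coker(\Psi)$ is torsion. On the other hand every component of $\Psi$ lands in $G(-p) = \mathfrak m_p G$, so $\Psi$ vanishes on the fibre at $p$ and $\coker(\Psi)$ has length at least $\rk(G) > 0$ there. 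Thus $T := \coker(\Psi)$ is a nonzero torsion sheaf. Since $\Psi$ is a morphism between the objects $F^{\oplus N}, G \in \langle F \oplus G\rangle_1$, its cone lies in $\langle F \oplus G\rangle_2$, and by Lemma~\ref{lem:objects_on_curves_split} this cone splits as $\ker(\Psi)[1] \oplus \coker(\Psi)$; passing to the direct summand $T$ places it in $\langle F \oplus G\rangle_2$ and completes the reduction. The main obstacle is exactly this construction: arranging $\coker(\Psi)$ to be simultaneously torsion (generic surjectivity) and nonzero (failure of surjectivity at $p$) within a single morphism, which the calibrated global-generation threshold makes possible. Once this is in place, the bookkeeping of the first paragraph is routine.
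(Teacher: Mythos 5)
Your proof is correct and follows essentially the same route as the paper: twist $F^\dual \otimes G$ down by a point $p$, use the $2g$ slope gap to get global generation, build a nonzero torsion cokernel in a single cone (zero at $p$, surjective elsewhere), and then combine Lemma~\ref{lem:torsion_plus_bundle_generating_time} with Corollary~\ref{cor:generating_time_bound} to obtain $192g+7$. The only differences are cosmetic---you use the full evaluation map $\Psi$ and the object $T \oplus G$ where the paper picks a finite surjection at a point $q \neq p$ and uses $T \oplus F$, and you verify global generation directly via Serre duality (making explicit the characteristic-zero tensor-semistability fact the paper uses implicitly) instead of citing Lemma~\ref{lem:effective_global_generation_bound}.
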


\begin{proof}
  Pick a point $p \in C$. Consider the vector bundle $\mathcal{E} := F^\dual \otimes G \otimes \OO_{C}(- p)$. By the inequality of the slopes each semistable factor in the Harder--Narasimhan filtration of $\mathcal{E}$ has slope strictly larger than $2 g - 1$. It is well-known that any semistable bundle of slope strictly larger than $2 g - 1$ is globally generated and has no higher cohomology (see Lemma~\ref{lem:effective_global_generation_bound} below); this implies that $\mathcal{E}$, being an iterated extension of such bundles, is also globally generated. Let $q \in C$ be a point not equal to $p$. Since the vector bundle $\mathcal{E} \caniso F^\dual \otimes G \otimes \OO_{C}(- p)$ is globally generated, any map
  \[ F|_q \to (G \otimes \OO_{C}(- p))|_q \]
  of the fibers at the point $q$ can be lifted to a globally defined morphism $F \to G \otimes \OO_{C}(- p)$. For some $r \in \ZZ$ there exists a surjection
  \[ (F^{\oplus r})|_q \to (G \otimes \OO_{C}(- p))|_q, \]
  which therefore lifts to some morphism $\phi\colon F^{\oplus r} \to G \otimes \OO_{C}(- p)$ which is surjective at the point $q$. Consider the composition
  \[ F^{\oplus r} \xrightarrow{\phi} G \otimes \OO_{C}(- p) \xrightarrow{\cdot p} G. \]
  This is a morphism of vector bundles which on the fiber at $q$ is surjective and on the fiber at $p$ is zero. Thus the cokernel of this composition is a non-trivial torsion sheaf $T$. By Lemma~\ref{lem:kernel_cokernel_in_cone} we obtain that $T \in \langle F \oplus G \rangle_{2}$. Hence by Lemma~\ref{lem:torsion_plus_bundle_generating_time} we get a bound
  \[ \Dbcoh(C) = \langle T \oplus F \rangle_{96 g + 4} \subset \langle F \oplus G \rangle_{192 g + 8}, \]
  so the generating time of $F \oplus G$ is at most $192 g + 7$.
\end{proof}

\begin{remark}
  Of course, this is a very lazy upper bound; for example, it is clear that the torsion sheaf~$T$ constructed in the proof already contains a skyscraper sheaf~$\OO_p$ as a direct summand, and appealing to Lemma~\ref{lem:torsion_plus_bundle_generating_time} wastes half of the generating time to construct a skyscraper from itself.
\end{remark}

In the proof we used the following well-known fact (see, e.g.,~\cite[Lem.~3.2.6]{spec-book}). It is an exercise on Serre duality and we leave the proof as an exercise to the reader.

\begin{lemma}
  \label{lem:effective_global_generation_bound}
  Let $C$ be a smooth projective curve of genus $g$, and let $E$ be a semistable vector bundle on $C$ with slope $\mu(E) > 2 g - 1$. Then $E$ is globally generated and $H^1(C, E) = 0$.
\end{lemma}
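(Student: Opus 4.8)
The plan is to deduce both conclusions from a single slope-based vanishing statement, proved via Serre duality, applied to appropriate twists of $E$.

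First I would isolate the key sub-claim: if $V$ is a semistable vector bundle on $C$ with $\mu(V) > 2g-2$, then $H^1(C, V) = 0$. By Serre duality one has $H^1(C, V)^\dual \caniso \Hom(V, \omega_C) \caniso H^0(C, V^\dual \otimes \omega_C)$. Since $V$ is semistable of slope $\mu(V)$, the bundle $V^\dual \otimes \omega_C$ is semistable of slope $-\mu(V) + (2g-2) < 0$. A semistable bundle $W$ of strictly negative slope has no nonzero global sections: any nonzero section is a nonzero map $\OO_C \to W$, whose image is isomorphic to $\OO_C$ (as $\OO_C$ is a line bundle and $W$ is torsion-free, the kernel must vanish), hence a subsheaf of slope $0$, contradicting $\mu(W) < 0$ by semistability. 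Thus $H^0(C, V^\dual \otimes \omega_C) = 0$ and so $H^1(C, V) = 0$. Applying this directly to $E$, whose slope satisfies $\mu(E) > 2g-1 > 2g-2$, already yields the vanishing $H^1(C, E) = 0$.

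For global generation I would check that the evaluation map is surjective on every fiber. Fixing a point $p \in C$, consider the short exact sequence
\[ 0 \to E \otimes \OO_C(-p) \to E \to E|_p \to 0, \]
where $E|_p \caniso E \otimes \OO_p$ is the fiber at $p$. Tensoring by a line bundle preserves semistability, so $E \otimes \OO_C(-p)$ is semistable of slope $\mu(E) - 1 > 2g-2$, and the sub-claim gives $H^1(C, E \otimes \OO_C(-p)) = 0$. The associated long exact sequence in cohomology then forces the restriction map $H^0(C, E) \to H^0(C, E|_p) \iso E|_p$ to be surjective. Since this holds for every $p$, the evaluation morphism $H^0(C, E) \otimes \OO_C \to E$ is surjective on all fibers, hence surjective by Nakayama's lemma, i.e.\ $E$ is globally generated.

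The only genuinely delicate point is the sub-claim that a semistable bundle of negative slope has no sections; everything else is formal. The one piece of bookkeeping worth emphasizing is that twisting by $\OO_C(-p)$ lowers the slope by exactly $1$, so the hypothesis $\mu(E) > 2g-1$ is precisely what keeps $E \otimes \OO_C(-p)$ above the threshold $2g-2$ needed for the $H^1$-vanishing — which explains why the global generation bound is one unit stronger than the bare cohomology vanishing.
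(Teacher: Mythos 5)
Your proof is correct and is precisely the standard Serre-duality argument the paper has in mind (it leaves the lemma as ``an exercise on Serre duality,'' citing \cite[Lem.~3.2.6]{spec-book}): vanishing of $H^0$ for semistable bundles of negative slope gives $H^1(C,E)=0$ via duality, and twisting by $\OO_C(-p)$ gives fiberwise surjectivity of evaluation. All the delicate points --- the image of a nonzero section being a slope-$0$ subsheaf of a torsion-free sheaf, and the one-unit gap between the thresholds $2g-2$ and $2g-1$ --- are handled correctly.
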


The proof of Lemma~\ref{lem:sufficiently_unstable_generating_time} suggests that the number of truly necessary inductive steps in the proof of Proposition~\ref{prop:sufficiently_unstable_bundles} could possibly be independent of the ranks of $F$ and $G$. I don't know whether that is true and it feels like a difficult problem. Thus, the question:

\begin{question}
  Is there an upper bound for the generating times of classical generators constructed in Proposition~\ref{prop:sufficiently_unstable_bundles} in terms of the genus $g$ of the curve?
\end{question}

I am slightly more inclined to say no. In fact, I expect that there are arbitrarily slow classical generators of $\Dbcoh(C)$ in the gap between Proposition~\ref{prop:sufficiently_unstable_bundles} and Lemma~\ref{lem:sufficiently_unstable_generating_time}, but this is just a feeling, not a mathematical argument. Note, however, that for curves of genus one it is known that any classical generator has generating time at most 4, see \cite{ballard-favero-katzarkov}; in some sense, the reason is that Lemma~\ref{lem:bundles_on_genus_one} allows for a quick reduction to the generator which behaves like $\OO_C \oplus \OO_p$.

\printbibliography
\end{document}